\numberwithin{equation}{section}
\def\blue{\textcolor{blue}}
\def\red{\textcolor{red}}
\theoremstyle{plain}
\newtheorem{theorem}{Theorem}[section]
\newtheorem{lemma}[theorem]{Lemma}
\newtheorem{corollary}[theorem]{Corollary}
\newtheorem{proposition}[theorem]{Proposition}
\theoremstyle{definition}
\newtheorem{conjecture}[theorem]{Conjecture}
\newtheorem{remark}[theorem]{Remark}
\newtheorem{?}[theorem]{Problem}
\newcommand{\bs}{{\mathfrak S}}
\newcommand{\des}{{\rm des}}
\newcommand{\altdes}{{\rm altdes}}
\newcommand{\Altdes}{{\rm \widehat{D}}}
\newcommand{\altmaj}{{\rm altmaj}}
\newcommand{\msn}{\mathfrak{S}_n}
\newcommand{\ms}{\mathfrak{S}}
\newcommand{\msnn}{\mathfrak{S}_{n+1}}
\newcommand{\st}{\mathsf{st}}
\newcommand{\lrf}[1]{\lfloor #1\rfloor}
\newcommand{\maj}{{\rm maj}}
\newcommand{\bfa}{{\bf a}}
\newcommand{\bfb}{{\bf b}}
\newcommand{\bfc}{{\bf c}}
\newcommand{\bfd}{{\bf d}}
\newcommand\floor[1]{\lfloor#1\rfloor}
\def\boxit#1{\leavevmode\hbox{\vrule\vtop{\vbox{\kern.33333pt\hrule\kern1pt\hbox{\kern1pt\vbox{#1}\kern1pt}}\kern1pt\hrule}\vrule}}
\begin{document}

\title[Alternating descent polynomials]{Positivity and divisibility  of alternating \\descent polynomials}

\author[Z. Lin]{Zhicong Lin}
\address[Zhicong Lin]{Research Center for Mathematics and Interdisciplinary Sciences, Shandong University, Qingdao 266237, P.R. China}
\email{linz@sdu.edu.cn}

\author[S.-M.~Ma]{Shi-Mei Ma}
\address[Shi-Mei Ma]{School of Mathematics and Statistics,
        Northeastern University at Qinhuangdao,
         Hebei 066004, P.R. China}
\email{shimeimapapers@163.com}

\author[D.G.L.~Wang]{David G.L. Wang}
\address[David G.L. Wang]{School of Mathematics and Statistics \& Beijing Key Laboratory on MCAACI, Beijing Institute of Technology, 102488 Beijing, R.R. China}
\email{glw@bit.edu.cn}

\author[L. Wang]{Liuquan Wang}
\address[Liuquan Wang]{School of Mathematics and Statistics, Wuhan University, Wuhan 430072, Hubei, People's Republic of China}
\email{
wanglq@whu.edu.cn}

\date{\today}

\begin{abstract}
The alternating descent statistic on permutations was introduced by Chebikin
as a variant of the descent statistic. We show that the alternating descent polynomials on permutations are unimodal via a five-term recurrence relation. We also found a  quadratic  recursion for the alternating major index $q$-analog of the alternating descent polynomials. As an interesting application of this  quadratic recursion, we show that $(1+q)^{\lfloor n/2\rfloor}$ divides $\sum_{\pi\in\bs_n}q^{\altmaj(\pi)}$, where $\bs_n$ is the set of all permutations of $\{1,2,\ldots,n\}$ and $\altmaj(\pi)$ is the alternating major index of $\pi$. This leads us to discover a $q$-analog of $n!=2^{\ell}m$, $m$ odd, using the statistic of alternating major index. Moreover, we study  the $\gamma$-vectors of the alternating descent polynomials by using these two recursions and the {\bf cd}-index. Further intriguing conjectures are formulated, which indicate that the alternating descent statistic deserves more work.
\end{abstract}

\subjclass[2010]{Primary 05A05, 05A15, 05A19}

\keywords{Euler numbers; alternating descents; unimodality; divisibility; $\gamma$-vectors}

\maketitle


\section{Introduction}

Let $\msn$ denote the symmetric group of all permutations of $[n]$, where $[n]=\{1,2,\ldots,n\}$.
Here a permutation  $\pi$ in $\msn$ is viewed as a word $\pi=\pi_1\pi_2\cdots\pi_n$ with $\pi_i=\pi(i)$. A {\it descent} of a permutation $\pi\in\msn$ is an index $i\in[n-1]$ such that $\pi_i>\pi_{i+1}$.
The classical {\it Eulerian polynomial} $A_n(t)$ may be defined as the descent polynomial on $\bs_n$ (cf.~\cite{comtet,fs,pe2}):
$$
A_n(t):=\sum_{\pi\in\msn}t^{\des(\pi)},
$$
where $\des(\pi)$ is the number of descents of $\pi$.

As a variation of the descent statistic, the number of {\em alternating descents}  of a permutation $\pi\in\msn$, denoted $\altdes(\pi)$,  is defined as the cardinality of
$$\Altdes(\pi):=\{2i: \pi_{2i}<\pi_{2i+1}\}\cup \{2i+1: \pi_{2i+1}>\pi_{2i+2}\}.$$
We say that $\pi$ has a {\it 3-descent} at index $i$ if $\pi_i\pi_{i+1}\pi_{i+2}$ has one of the patterns: $132$, $213$ or $321$. These two permutation statistics were introduced by Chebikin in~\cite{Chebikin08}, where he showed that
 the alternating descents on $\msn$ is equidistributed
with the 3-descent statistic on $\{\pi\in\msnn : \pi_1=1\}$.
The equations
$$\widehat{A}_n(t)=\sum_{\pi\in\msn}t^{\altdes(\pi)}=\sum_{k=0}^{n-1}\widehat{A}_{n,k}t^k$$
define the {\it alternating Eulerian polynomials} $\widehat{A}_n(t)$ and the {\it alternating Eulerian numbers} $\widehat{A}_{n,k}$.
The first few $\widehat{A}_n(t)$ are given as follows:
\begin{align*}
   \widehat{A}_1(t)& =1,\quad
  \widehat{A}_2(t)=1+t, \quad
  \widehat{A}_3(t)=2+2t+2t^2, \\
  \widehat{A}_4(t)& =5+7t+7t^2+5t^3,\quad\widehat{A}_5(t)=16+26t+36t^2+26t^3+16t^4.
\end{align*}
Note that $\widehat{A}_{n,n-1}$ is the famous  {\it Euler number} (see~\cite{andre,Stanley}) of order $n$, usually denoted by $E_n$, which enumerates the permutations $\pi\in\msn$ that is {\it down-up}:
$$\pi_1>\pi_2<\pi_3>\pi_4<\cdots.$$
Chebikin~\cite[Lemma~6.1]{Chebikin08} obtained the following recurrence relation for the alternating Eulerian numbers:
\begin{equation}\label{rec:che}
\sum_{i=0}^n\sum_{j=0}^k\binom{n}{i}\widehat{A}_{i,j}\widehat{A}_{n-i,k-j}=(n+1-k)\widehat{A}_{n,k}+(k+1)\widehat{A}_{n,k+1},
\end{equation}
from which he computed the exponential generating function for $\widehat{A}_n(t)$
\begin{equation}\label{EGF-Anx}
1+\sum_{n\geq 1}t\widehat{A}_{n}(t)\frac{z^n}{n!}=\frac{1-t}{1-t(\sec(1-t)z+\tan(1-t)z)}.
\end{equation}

In recent years, the alternating descents and its associated permutation statistics have attracted the attention of several authors~\cite{Gessel14,my,rem}. In particular, Remmel~\cite{rem} calculated the joint distribution of the statistics of alternating descents and alternating major index.  Gessel and Zhuang~\cite{Gessel14} showed that  permutations with even valleys and odd peaks can be characterized by alternating descents/runs,  which leads to a solution of  a  question arising in combinatorial topology posed by Nicolaescu. Ma and Yeh~\cite{my} expressed the polynomial $\widehat{A}_n(t)$ in terms of the {\em derivative polynomial} $P_n(t)$, where $P_n(\tan\theta)=\frac{d^n}{d\theta^n}\tan\theta$.

In this paper, we investigate the positivity and divisibility properties  of the alternating descent polynomials. Some further conjectures, which indicate that the alternating descent statistic deserves more work, will also be presented in the last section of this paper.

\subsection{Main results}
A polynomial $h(t)=\sum_{i=0}^n h_it^i$ with real coefficients is said to be
\begin{itemize}
\item {\em unimodal}  if there exists $0\leq c\leq n$, such that $h_0\leq h_1\leq\cdots h_c\geq h_{c+1}\geq\cdots h_n$;
\item or {\em palindromic} (of center $n/2$) if $h_i=h_{n-i}$ for all $0\leq i\leq \lfloor n/2\rfloor$.
\end{itemize}
Palindromic and unimodal polynomials with combinatorial meanings arise frequently in algebra, combinatorics and geometry (cf.~\cite{Ath,bran}).
One of the most interesting example is the Eulerian polynomials   (cf.~\cite[p.~292]{comtet}), which even possess the stronger property of $\gamma$-positivity~\cite{fs,pe2}. Recall that  palindromic polynomials in $\mathbb{R}[t]$ with center $n/2$  has a basis
$$
B_n:=\{t^k(1+t)^{n-2k}\}_{k=0}^{\lfloor n/2\rfloor}.
$$
If $h(t)=\sum_{k=0}^{\lfloor n/2\rfloor}\gamma_kt^k(1+t)^{n-2k}$, we call $\{\gamma_k\}_{k=0}^{\lfloor n/2\rfloor}$ the {\em $\gamma$-vector} of $h$. If the $\gamma$-vector of $h$ is nonnegative, then we say $h$ is $\gamma$-positive. Since each term $t^k(1+t)^{n-2k}$ in the expansion is palindromic and unimodal, $\gamma$-positivity implies unimodality.

Besides the Eulerian polynomials, many other palindromic and unimodal polynomials arising in the study of permutation statistics  are known to be $\gamma$-positive~\cite{pe2}.  However, it turns out that the alternating Eulerian polynomial  $\widehat{A}_n(t)$ is  unimodal and has $\gamma$-vector  alternates in sign.
\begin{theorem}\label{main:1}
For $n\geq1$, the alternating Eulerian polynomial $\widehat{A}_n(t)$ is palindromic and unimodal. Moreover, if we write
$$
\widehat{A}_n(t)=\sum_{k=0}^{\lrf{({n-1})/{2}}}a(n,k)(-2t)^{k}(1+t)^{n-1-2k} \text{ and }
a_n(x)=\sum_{k=0}^{\lrf{({n-1})/{2}}}a(n,k)x^k,
$$
then $a_n(x-1)$ is the descent polynomial over Simsun permutations of length $n-1$.
\end{theorem}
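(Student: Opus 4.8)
The plan is to dispatch palindromicity by an involution, derive unimodality from a five-term recurrence, and prove the Simsun identity by matching recurrences.

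\emph{Palindromicity.} I would use the complementation map $\pi\mapsto\bar\pi$ on $\msn$ given by $\bar\pi_i=n+1-\pi_i$. Since $\pi_i<\pi_{i+1}$ if and only if $\bar\pi_i>\bar\pi_{i+1}$, and since an index $i$ is an alternating descent precisely when it is an ascent at an even index or a descent at an odd index, complementation converts each alternating descent into a non-alternating-descent and conversely; hence $\altdes(\bar\pi)=(n-1)-\altdes(\pi)$. As $\pi\mapsto\bar\pi$ is an involution on $\msn$, this gives $\widehat{A}_{n,k}=\widehat{A}_{n,n-1-k}$, so $\widehat{A}_n(t)$ is palindromic of center $(n-1)/2$. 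In particular it has a unique expansion in the rescaled basis $\{(-2t)^k(1+t)^{n-1-2k}\}$, which is what defines $a(n,k)$ and $a_n(x)$.

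\emph{Unimodality.} From \eqref{EGF-Anx} I would extract a five-term recurrence. Writing $f=\sec+\tan$, one checks $f'=(1+f^2)/2$, so the right-hand side of \eqref{EGF-Anx} satisfies a Riccati equation; differentiating once more to linearise and extracting coefficients produces a recurrence expressing $\widehat{A}_{n+1}(t)$ through $\widehat{A}_n,\widehat{A}_n',\widehat{A}_{n-1},\widehat{A}_{n-1}'$ with explicit low-degree polynomial coefficients in $t$. I would then induct on $n$, assuming $\widehat{A}_n$ and $\widehat{A}_{n-1}$ palindromic and unimodal, and deduce the coefficientwise inequalities $\widehat{A}_{n+1,k}\le\widehat{A}_{n+1,k+1}$ for $2k+2\le n$, using palindromicity to control the derivative contributions. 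I expect this coefficient bookkeeping, and in particular the behaviour near the center according to the parity of $n$, to be the main obstacle.

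\emph{Simsun identity.} Let $R_m(x)=\sum_\sigma x^{\des\sigma}$ be the descent polynomial over Simsun permutations of length $m$, with $R_{m,k}$ the number having $k$ descents. Inserting the largest letter into a Simsun permutation, and tracking which insertions preserve the no-double-descent condition on every restriction together with how each changes the descent number, yields
\begin{equation*}
R_{m+1,k}=(k+1)R_{m,k}+(m-2k+2)R_{m,k-1},
\end{equation*}
equivalently $R_{m+1}(x)=(1+mx)R_m(x)+x(1-2x)R_m'(x)$. Setting $\widetilde{A}_n(t):=\sum_k R_{n-1,k}(1+t^2)^k(1+t)^{n-1-2k}$, I would use this recurrence to verify that $\widetilde{A}_n$ satisfies the same five-term recurrence as $\widehat{A}_n$; since $\widetilde{A}_1=\widehat{A}_1$ and $\widetilde{A}_2=\widehat{A}_2$, induction forces $\widehat{A}_n=\widetilde{A}_n$. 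Dividing by $(1+t)^{n-1}$ and substituting $y=-2t/(1+t)^2$ then turns $\sum_k a(n,k)(-2t)^k(1+t)^{n-1-2k}$ into $(1+t)^{n-1}a_n(y)$ and, because $1+y=(1+t^2)/(1+t)^2$, identifies this with $(1+t)^{n-1}R_{n-1}(1+y)$; hence $a_n(y)=R_{n-1}(1+y)$, i.e. $a_n(x-1)=R_{n-1}(x)$.

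As a byproduct, the positive form $\widehat{A}_n(t)=\sum_k R_{n-1,k}(1+t^2)^k(1+t)^{n-1-2k}$ gives a cleaner unimodality proof when $n$ is even: every summand then carries a factor $(1+t)^{\ge1}$ and is itself palindromic and unimodal, and a sum of palindromic unimodal polynomials with a common center is palindromic and unimodal. The genuinely delicate case is $n$ odd, where the top summand $(1+t^2)^{(n-1)/2}$ is not unimodal and must be offset by the fill-in from lower summands; this is why I anticipate that the five-term recurrence, rather than the positive expansion alone, is needed for unimodality in full generality.
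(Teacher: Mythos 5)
Your skeleton coincides with the paper's (the complementation involution for palindromicity; a linear recurrence plus induction for unimodality; matching that recurrence against the Chow--Shiu recursion \eqref{rec:sim} for the Simsun identity), and the palindromicity and Simsun steps are sound as outlined. However, everything hinges on the five-term recurrence, and your route to it contains a genuine gap. Writing $\widehat{A}(t;z)$ for the left side of \eqref{EGF-Anx}, the relation $f'=(1+f^2)/2$ does give the Riccati equation $2t\,\partial_z\widehat{A}(t;z)=(1+t^2)\widehat{A}(t;z)^2-2(1-t)\widehat{A}(t;z)+(1-t)^2$, but ``differentiating once more to linearise'' is not a valid operation: another $z$-derivative yields $t\,\partial_z^2\widehat{A}=\bigl((1+t^2)\widehat{A}-(1-t)\bigr)\partial_z\widehat{A}$, which is still quadratic, and coefficient extraction from any such relation produces convolution sums $\sum_i\binom{n}{i}\widehat{A}_i(t)\widehat{A}_{n-i}(t)$ --- essentially Chebikin's recurrence \eqref{rec:che} --- never a linear one. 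In fact no linear differential equation in $z$ alone can hold, since $\sec z+\tan z$ has infinitely many poles; the $t$-derivative is indispensable. What does work, staying within your analytic strategy, is to verify from $f'=(1+f^2)/2$ that the generating function satisfies the first-order PDE
\begin{equation*}
\Bigl(1-\frac{(1+t^2)z}{2}\Bigr)\frac{\partial}{\partial z}\widehat{A}(t;z)-\frac{1}{2}(1-t)(1+t^2)\frac{\partial}{\partial t}\widehat{A}(t;z)
=\frac{(t-1)^2}{2t}+\frac{t^2+2t-1}{2t}\widehat{A}(t;z),
\end{equation*}
whose $z^n$-coefficient is exactly \eqref{eq:a1}, namely $2\widehat{A}_{n+1}(t)=(2t+n+1+nt^2-t^2)\widehat{A}_n(t)+(1-t)(1+t^2)\widehat{A}_n'(t)$; note that only level $n$ appears, not $\widehat{A}_{n-1}$ as you anticipated. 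The paper runs this in the opposite (and analysis-free) direction: it proves the coefficient recurrence \eqref{recuAnk} by a purely combinatorial min-/max-insertion double count and then solves for the EGF.

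The second gap is that you explicitly defer the unimodality induction as ``the main obstacle,'' yet that induction is the substance of the first half of the theorem. For the record it is short once \eqref{recuAnk} is available: for $0\le k\le\lfloor n/2\rfloor$ one has
$2(\widehat{A}_{n+1,k}-\widehat{A}_{n+1,k-1})=(k+1)(\widehat{A}_{n,k+1}-\widehat{A}_{n,k-3})+(n+1-2k)(\widehat{A}_{n,k}+\widehat{A}_{n,k-2}-\widehat{A}_{n,k-1}-\widehat{A}_{n,k-3})$,
and the induction hypothesis (palindromicity and unimodality of $\widehat{A}_n$ about $(n-1)/2$) gives $\widehat{A}_{n,k+1}\ge\widehat{A}_{n,k-3}$, $\widehat{A}_{n,k}\ge\widehat{A}_{n,k-1}$ and $\widehat{A}_{n,k-2}\ge\widehat{A}_{n,k-3}$, making the right side nonnegative. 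Your Simsun argument is correct conditional on the recurrence and is the paper's first proof run in reverse: the paper substitutes $\widehat{A}_n(t)=(1+t)^{n-1}a_n(x)$ with $x=-2t/(1+t)^2$ into \eqref{eq:a1} to obtain $a_{n+1}(x)=(n+(n-1)x)a_n(x)-(1+x)(1+2x)a_n'(x)$ and compares with \eqref{rec:sim}, whereas you push \eqref{rec:sim} through the same change of variables and verify the recurrence for $\widetilde{A}_n$; these are the same computation. (The paper also has a second, independent proof of the Simsun identity via the $\mathbf{cd}$-index, which your proposal does not touch.) Finally, your even-$n$ byproduct --- that the positive expansion $\sum_kR_{n-1,k}(1+t^2)^k(1+t)^{n-1-2k}$ yields unimodality directly, with the odd-$n$ case obstructed by the top term $(1+t^2)^{(n-1)/2}$ --- is a correct and pleasant observation, but it presupposes both the recurrence and the Simsun identity, so it cannot substitute for either missing piece.
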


Let $\maj(\pi):=\sum_{\pi_i>\pi_{i+1}}i$ and $\altmaj(\pi):=\sum_{i\in\Altdes(\pi)}i$ be the {\em major index} and the {\em alternating major index} of $\pi\in\ms_n$, respectively. The major index is a {\em Mahonian statistic}, that is
\begin{equation}\label{mahon}
\sum_{\pi\in\ms_n}q^{\maj(\pi)}=[n]_q!,
\end{equation}
 where $[n]_q:=1+q+\cdots q^{n-1}$ and $[n]_q!:=\prod_{j=1}^n[j]_q$ is the $q$-analog of $n!$.
 The alternating major index $q$-analog of $\widehat{A}_n(t)$ was first considered by Remmel~\cite{rem}:
$$
\widehat{A}_n(t,q):=\sum_{\pi\in\ms_n}t^{\altdes(\pi)}q^{\altmaj(\pi)}.
$$
He computed the exponential generating function for $\widehat{A}_n(t,q)$ via a homomorphism of the ring of symmetric functions that
\begin{equation}\label{gen:rem}
\sum_{n\geq0}\frac{z^n}{n!}\frac{\widehat{A}_n(t,q)}{(t;q)_{n+1}}=\sum_{k\geq0}t^k\prod_{j=0}^k(\sec(zq^j)+\tan(zq^j)),
\end{equation}
which is a $q$-analog of~\eqref{EGF-Anx}. Here $(t;q)_n:=\prod_{i=0}^{n-1}(1-tq^i)$ is the $q$-shifted factorial.
 Our second result indicates that the generating function
 $\widehat{A}_n(1,q)$ of alternating major index over $\ms_n$ provides a $q$-analog of $n!=2^{\ell}m$ with $m$ odd.



\begin{theorem}
\label{conj:altmaj}
For $n\ge 1$, we have
\begin{equation}\label{fac:altmaj}
\widehat{A}_n(1,q)=\sum_{\pi\in\mathfrak{S}_n}q^{\altmaj(\pi)}
=\widehat{E}_n(q)\prod_{k=1}^{\lfloor\log_2 n\rfloor}\prod_{i=1}^{\lfloor n/2^k \rfloor}(1+q^i),
\end{equation}
where $\widehat{E}_n(q)$ is a palindromic polynomial in $\mathbb{Z}[q]$
with constant term the Euler number~$E_n$.
\end{theorem}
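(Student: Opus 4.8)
The plan is to reduce all three assertions to the single divisibility statement
\[
F_n(q)\mid G_n(q),\qquad\text{where } G_n(q):=\widehat{A}_n(1,q)=\sum_{\pi\in\msn}q^{\altmaj(\pi)}\ \text{ and }\ F_n(q):=\prod_{k=1}^{\lfloor\log_2 n\rfloor}\prod_{i=1}^{\lfloor n/2^k\rfloor}(1+q^i).
\]
Two of the three assertions then come for free. First, the complement map $\pi\mapsto\pi^c$ with $\pi^c_i=n+1-\pi_i$ reverses every comparison, so $\Altdes(\pi^c)=[n-1]\setminus\Altdes(\pi)$ and hence $\altmaj(\pi^c)=\binom{n}{2}-\altmaj(\pi)$; summing over $\msn$ gives $G_n(q)=q^{\binom{n}{2}}G_n(1/q)$, so $G_n$ is palindromic. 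As $F_n$ is a product of the palindromic polynomials $1+q^i$ it is palindromic too, and a short computation shows that the exact quotient of two palindromic polynomials is palindromic; this yields the palindromicity of $\widehat{E}_n=G_n/F_n$. Second, since $F_n(0)=1$ the constant term of $\widehat{E}_n$ equals $G_n(0)$, which counts the permutations with $\Altdes(\pi)=\varnothing$, i.e. the up--down permutations $\pi_1<\pi_2>\pi_3<\cdots$, whose number is the Euler number $E_n$. Finally $F_n$ is monic with integer coefficients, so exact divisibility forces $\widehat{E}_n\in\mathbb{Z}[q]$, and setting $q=1$ turns the factorization into $n!=2^{v_2(n!)}\,\widehat{E}_n(1)$ with $\widehat{E}_n(1)$ odd, which is the advertised $q$-analog of $n!=2^{\ell}m$.

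It remains to prove $F_n\mid G_n$, which I would attack by strong induction on $n$. The product $F_n$ obeys the transparent recursion
\[
F_n(q)=F_{\lfloor n/2\rfloor}(q)\prod_{i=1}^{\lfloor n/2\rfloor}(1+q^i)\qquad(n\ge2),
\]
an immediate consequence of $\lfloor\lfloor n/2\rfloor/2^k\rfloor=\lfloor n/2^{k+1}\rfloor$ and $\lfloor\log_2 n\rfloor=1+\lfloor\log_2\lfloor n/2\rfloor\rfloor$; this tells me exactly which new factors must appear at stage $n$. For the driving recurrence on $G_n$ I would use the $t=1$ specialization of the quadratic recurrence for $\widehat{A}_n(t,q)$. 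Equivalently, writing $S(z)=\sec z+\tan z=\sum_{m\ge0}E_m z^m/m!$ and letting $\Phi(z,t,q)$ denote the right-hand side of \eqref{gen:rem}, one reads off the $q$-difference equation $\Phi(z,t,q)=S(z)\bigl(1+t\,\Phi(zq,t,q)\bigr)$; extracting the coefficient of $z^n/n!$ and clearing $(t;q)_{n+1}$ produces a recurrence that stays regular at $t=1$ and, after cancelling the common factor $1-q^n$, takes the clean form
\[
G_n(q)=E_n\,(q;q)_{n-1}+\sum_{i=1}^{n-1}\binom{n}{i}E_{n-i}\,q^i\,(q^{i+1};q)_{n-1-i}\,G_i(q).
\]

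The hard part will be to propagate the divisibility through this recurrence, and here I expect the genuine difficulty to lie. In the linear form above the factor $F_n$ is invisible term by term: already for $n=2$ and the root $q=-1$, the two summands $1-q$ and $2q$ are individually nonzero, and only their sum $1+q$ vanishes, so the required factor is produced entirely by cancellation. The clean way around this should be a \emph{halving identity}---which I expect the quadratic recurrence to supply---exhibiting $G_n(q)$ as $\prod_{i=1}^{\lfloor n/2\rfloor}(1+q^i)$ times a polynomial that is in turn divisible by $F_{\lfloor n/2\rfloor}(q)$; combined with the product recursion for $F_n$, such an identity closes the induction at once. Absent a clean halving identity, the fallback is a cyclotomic analysis: for each primitive $d$-th root of unity $\omega$ one would compute from the product recursion the order to which $\omega$ must divide $G_n$, and verify it by tracking, \emph{with their cancellations}, the orders of vanishing of the summands in the recurrence while feeding in the inductive orders of the $G_i(\omega)$. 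Carrying out this bookkeeping---whether via the halving identity or the cancellation analysis---is the crux; once $F_n\mid G_n$ is secured, the palindromicity and constant-term statements established above complete the proof.
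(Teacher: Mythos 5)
Your peripheral reductions are correct and essentially coincide with the paper's: palindromicity of $\widehat{A}_n(1,q)$ via the complement map, the constant term of $\widehat{E}_n$ counting up--down permutations (hence $E_n$), integrality of the quotient because the product $F_n:=\prod_{k}\prod_{i}(1+q^i)$ is monic, and your linear recurrence
\[
G_n(q)=E_n\,(q;q)_{n-1}+\sum_{i=1}^{n-1}\binom{n}{i}E_{n-i}\,q^i\,(q^{i+1};q)_{n-1-i}\,G_i(q)
\]
is indeed a correct consequence of the functional equation $F(z)=S(z)F(zq)$ at $t=1$. But the entire content of the theorem is the divisibility $F_n\mid G_n$, and your proposal stops exactly there. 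You correctly diagnose that the recurrence exhibits no term-by-term divisibility (your $n=2$ example), and then you offer two strategies, neither executed. The ``halving identity'' --- $G_n$ equals $\prod_{i=1}^{\lfloor n/2\rfloor}(1+q^i)$ times something divisible by $F_{\lfloor n/2\rfloor}$ --- is, given your own product recursion for $F_n$, nothing but a restatement of $F_n\mid G_n$; hoping that the quadratic recursion ``supplies'' it is not an argument, and in fact the paper shows that the quadratic recursion yields only the $m=1$ factor $(1+q)^{\lfloor n/2\rfloor}$, not the full product. The cyclotomic ``bookkeeping with cancellations'' is likewise only a plan. So this is a genuine gap, located at the crux of the theorem.

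It is worth recording what actually closes this gap in the paper, because it requires an idea absent from your setup. The paper first proves (by computing the multiplicity of each cyclotomic polynomial $\Phi_{2m}$, giving $F_n=\prod_{m=1}^n\Phi_{2m}(q)^{\lfloor n/2m\rfloor}$) that $F_n\mid G_n$ is equivalent to $(1+q^m)^{\lfloor n/2m\rfloor}\mid G_n$ for every $m\le\lfloor n/2\rfloor$. For each fixed $m$ it then writes $G_n=F^{(n)}(0)\,(q;q)_n$ with $F(z)=\prod_{j\ge0}\bigl(\sec(zq^j)+\tan(zq^j)\bigr)$, notes that $1+q^m$ divides $(q;q)_n$ to order exactly $\lfloor n/2m\rfloor$, and shows that the rational function $F^{(n)}(0)$ has denominator coprime to $1+q^m$. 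The key trick is logarithmic differentiation: $F'(z)/F(z)=\sum_{j\ge0}q^j\sec(zq^j)$, so by Fa\`a di Bruno's formula the only \emph{new} denominator entering $F^{(n)}(0)$ at stage $n$ is $1-q^n$, weighted by $\sec^{(n-1)}(0)$ --- which vanishes when $n$ is even, while for odd $n$ the polynomial $1-q^n$ is coprime to $1+q^m$. This parity mechanism is precisely what your recurrence lacks (there every coefficient $E_{n-i}$ of $\sec z+\tan z$ is nonzero, so nothing vanishes and the divisibility hides in cancellations); without it, or some substitute for it, your induction cannot close.
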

 For the sake of convenience, we list the first few factorization (in the ring $\mathbb{Q}[q]$)  of $\widehat{A}_n(1,q)$ in the following:
\begin{align*}
\widehat{A}_2(1,q)&=1+q,\\
\widehat{A}_3(1,q)&=(1+q)(2-q+2q^2),\\
\widehat{A}_4(1,q)&=(1+q)^2(1+q^2)(5-7q+5q^2),\\
\widehat{A}_5(1,q)&=(1+q)^2(1+q^2)(16-23q+18q^2-7q^3+18q^4-23q^5+16q^6),\\
\widehat{A}_6(1,q)&=(1+q)^2(1+q^2)(1+q^3)\widehat{E}_6(q),\\
\widehat{A}_7(1,q)&=(1+q)^2(1+q^2)(1+q^3)\widehat{E}_7(q),\\
\widehat{A}_8(1,q)&=(1+q)^3(1+q^2)^2(1+q^3)(1+q^4)\widehat{E}_8(q),
\end{align*}
where
\begin{align*}
\widehat{E}_6(q)&=61-87q+66q^2-82q^3+129q^4-82q^5+66q^6-87q^7+61q^8,\\
\widehat{E}_7(q)&=272-389q+298q^2-375q^3+603q^4-497q^5+617q^6-743q^7\\
&\quad+617q^8-497q^9+603q^{10}-375q^{11}+298q^{12}-389q^{13}+272q^{14},\\
\widehat{E}_8(q)&=1385-3364q+3490q^2-3406q^3+4915q^4-5397q^5+4873q^6-4677q^7\\
&\quad+4873q^8-5397q^9+4915q^{10}-3406q^{11}+3490q^{12}-3364q^{13}+1385q^{14}.
\end{align*}

The rest of this paper is laid out as follows. In Section~\ref{Section-2}, we prove combinatorially two recurrence relations, a five-term recurrence relation and a  quadratic  recursion, for the alternating descent polynomials. Utilizing these two recursions and the {\bf cd}-index of symmetric groups, a proof of Theorem~\ref{main:1} is provided in Section~\ref{Sec:3}. As an application of the famous Fa\`{a} di Bruno's formula, proofs of Theorem~\ref{conj:altmaj} and its equivalent form (see Theorem~\ref{div:altmaj}) are given in Section~\ref{Sec:4}. Finally in Section~\ref{Sec:final}, we conclude the paper with some remarks and further conjectures.

\section{Two recurrence relations}\label{Section-2}

\subsection{A five-term recurrence relation}
It is well known that the Eulerian numbers $A_{n,k}$ defined by $A_n(t)=\sum_{k=0}^{n-1}A_{n,k}t^k$ satisfy  the three-term recurrence relation (cf.~\cite[Sec.~1.4]{pe2})
\begin{equation}\label{rec:euler}
A_{n+1,k}=(k+1)A_{n,k}+(n-k+1)A_{n,k-1}.
\end{equation}
However,  recursion~\eqref{rec:che} for the alternating Eulerian numbers $\widehat{A}_{n,k}$ derived by Chebikin is somewhat complicated. In order to show the unimodality of $\widehat{A}(t)$, we find the following five-term recursion for $\widehat{A}_{n,k}$, which is an analog of~\eqref{rec:euler}.
\begin{theorem}\label{new:rec}
For $n\geq 1$, the numbers $\widehat{A}_{n,k}$ satisfy
\begin{equation}\label{recuAnk}
2\widehat{A}_{n+1,k}=(k+1)(\widehat{A}_{n,k+1}+\widehat{A}_{n,k-1})+(n-k+1)(\widehat{A}_{n,k}+\widehat{A}_{n,k-2})
\end{equation}
with initial conditions $\widehat{A}_{1,0}=1$ and $\widehat{A}_{1,k}=0$ for $k\neq 0$.
\end{theorem}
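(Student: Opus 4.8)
The plan is to prove \eqref{recuAnk} by a refined insertion/deletion argument on the position of the largest letter $n+1$, the key device being to \emph{complement the suffix} after the insertion point so as to neutralise the parity shift that otherwise makes the alternating descent statistic behave badly under insertion.

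First I would set up the map. Given $\pi=\pi_1\cdots\pi_n\in\msn$ and a slot $p\in\{1,\dots,n+1\}$, form $\sigma\in\msnn$ by inserting $n+1$ as the $p$-th letter and then replacing the suffix $\pi_p\cdots\pi_n$ (now occupying positions $p+1,\dots,n+1$) by its complement \emph{within its own set of values} (the $i$-th smallest entry is sent to the $i$-th largest). Since $n+1$ is the maximum, this is a bijection $\msn\times\{1,\dots,n+1\}\to\msnn$, because removing $n+1$ and un-complementing the suffix recovers $(\pi,p)$. The point of the complementation is that reversing every comparison inside the suffix, combined with the one-step shift of each suffix position (hence of its parity), leaves every alternating-descent indicator of the suffix unchanged; the prefix comparisons are untouched, and the two new comparisons at positions $p-1$ and $p$ are a forced ascent and a forced descent.

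Next I would extract the local change of statistic. A direct accounting of the four groups of comparisons gives, for an interior slot,
\[
\altdes(\sigma)=\altdes(\pi)+2\,[\,p\text{ odd}\,]-[\,(p-1)\text{ is an alternating descent of }\pi\,],
\]
so that $\altdes(\sigma)-\altdes(\pi)\in\{-1,0,1,2\}$, determined by the parity of the slot and by whether the slot $m=p-1$ falls on an existing alternating descent; the two boundary slots $p=1$ and $p=n+1$ are recorded separately. Grouping the images $\sigma$ by the value of this increment produces exactly the four families $\widehat{A}_{n,k+1},\widehat{A}_{n,k},\widehat{A}_{n,k-1},\widehat{A}_{n,k-2}$ appearing on the right-hand side of \eqref{recuAnk}, the shifts $-1,0,+1,+2$ matching the four index displacements.

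Finally I would pin down the coefficients. Collecting the contributions shows that the multiplicity with which each family enters is governed by how the alternating descents of $\pi$ distribute between odd-indexed and even-indexed positions. The clean weights $(k+1)$ and $(n-k+1)$, together with the global factor $2$, then emerge after invoking the reversal symmetry $\pi\mapsto\pi^{r}$: for odd $n$ this involution preserves $\altdes$ while interchanging alternating descents at odd and even positions, which forces the two ``even'' families and the two ``odd'' families to occur with equal weight and yields \eqref{recuAnk} after halving (I have checked that this reproduces $\widehat{A}_3$ from $\widehat{A}_2$ and, more tellingly, that the symbolic bookkeeping collapses precisely to the stated coefficients for odd $n$). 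I expect the \emph{even} $n$ case to be the main obstacle: there reversal no longer fixes the descent number, the number of odd positions genuinely exceeds the number of even positions by one, and the counts of alternating descents at odd versus even positions do not coincide. To absorb this discrepancy one must feed in the two boundary slots $p=1,\,n+1$ and the palindromicity $\widehat{A}_{n,k}=\widehat{A}_{n,n-1-k}$ (which itself follows from the value-complement $\pi\mapsto(n{+}1-\pi_i)_i$), matching the leftover terms against these corrections. The parity shift and this even-$n$ symmetrisation are where the real work lies; everything else is the routine verification sketched above.
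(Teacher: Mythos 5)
Your insertion map, the increment formula $\altdes(\sigma)=\altdes(\pi)+2\chi(p\text{ odd})-\chi\bigl(p-1\in\Altdes(\pi)\bigr)$ for interior slots, and the odd-$n$ symmetrization are all correct: for odd $n$ the reversal does preserve $\altdes$ while exchanging alternating descents at odd and even positions, so writing $D_o(j)$ and $D_e(j)$ for the total number of odd-position (resp.\ even-position) alternating descents over all $\pi$ with $\altdes(\pi)=j$, one gets $D_o(j)=D_e(j)=\tfrac{j}{2}\widehat{A}_{n,j}$ and your bookkeeping does collapse to \eqref{recuAnk}. The genuine gap is the even-$n$ case, and your proposed patch does not close it. For even $n$ the imbalance is real and unavoidable: there are $n/2$ odd positions but only $n/2-1$ even ones, and already for $n=2$ the class $\altdes(\pi)=1$ consists of the single permutation $21$ with $D_o(1)=1\neq 0=D_e(1)$, so \emph{no} bijection preserving $\altdes$ can swap the two parity classes. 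Carrying your single-insertion count through (boundary slots $p=1,n+1$ included) and setting $\delta(j)=D_o(j)-D_e(j)$, your expression for $2\widehat{A}_{n+1,k}$ agrees with the right-hand side of \eqref{recuAnk} if and only if
\begin{equation*}
\delta(k+1)-\delta(k)-\delta(k-1)+\delta(k-2)=\widehat{A}_{n,k}-2\widehat{A}_{n,k-1}+\widehat{A}_{n,k-2},
\end{equation*}
a four-term recurrence in $j$ for the imbalance $\delta$. The tools you invoke cannot produce it: palindromicity together with the complement involution only yield the single reflection relation $\delta(j)+\delta(n-1-j)=\widehat{A}_{n,j}$, which is one linear constraint pairing $j$ with $n-1-j$ and cannot imply a second-order difference identity. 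So for even $n$ you are left with a new claim essentially as hard as the theorem itself.

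The paper's proof removes the parity dependence at the source instead of averaging it away: it performs \emph{two} insertions, min-inserting a new smallest letter and max-inserting a new largest letter, each followed by your suffix complementation. Min-insertion decreases $\altdes$ precisely at even alternating-descent slots while max-insertion decreases it precisely at odd ones (and the same complementarity holds in the other three cases), so the union of the two operations meets every alternating descent of $\pi$ exactly once regardless of the parity of its position; the statistics $D_o,D_e$ never enter, and the coefficients $(k+1)$ and $(n-k+1)$ fall out uniformly for all $n$ with no case split. Each $\sigma\in\bs_{n+1}$ is produced exactly twice --- once by deleting its smallest letter and once by deleting its largest --- which is exactly where the factor $2$ on the left of \eqref{recuAnk} comes from. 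If you want to rescue your argument, the fix is to adjoin the companion min-insertion rather than to search for a symmetry of $\ms_n$ or an identity for $\delta$.
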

Before we present the proof of Theorem~\ref{new:rec}, we need to introduce an involution $\mathcal{C}$ (the complement) on words with distinct letters from $\mathbb{N}$. For a word $w=w_1w_2\cdots w_n$  of length $n$ with distinct letters from $\mathbb{N}$, let
$\mathcal{C}(w)=w'_1w'_2\cdots w'_n$ be the  word such that if $w_i$ is the $l_i$-th largest letter in $w$ then $w'_i$ is the $l_i$-th smallest letter in $w$. We also define the \emph{normalization} of $w$, denoted $\mathcal{N}(w)$, to be the permutation in $\bs_n$ obtained from $w$ by replacing the $i$-th ($1\leq i\leq n$) smallest letter by $i$.
For example, if $w=36752$ then $\mathcal{C}(w)=63257$ and $\mathcal{N}(w)=24531$.
\begin{proof}[{\bf Proof of Theorem~\ref{new:rec}}]
We describe two ways of inserting that are called  \emph{min-inserting} and  \emph{max-inserting}. For each permutation $\pi\in\bs_n$ and $0\leq j\leq n$, we call two permutations in $\bs_{n+1}$,
 $$
\mathcal{N}( \pi_1\pi_2\cdots\pi_j\,0\,\mathcal{C}(\pi_{j+1}\pi_{j+2}\cdots\pi_n))\quad\text{and}\quad  \pi_1\pi_2\cdots\pi_j\, (n+1)\,\mathcal{C}(\pi_{j+1}\pi_{j+2}\cdots\pi_n),
 $$
 the min-inserting $1$ and max-inserting $n+1$ in the $j$-th space of $\pi$, respectively. For instance, the min-inserting of $1$ and the max-inserting of $6$ in the second space of $24315\in\bs_5$ are respectively
$$
\mathcal{N}(240\mathcal{C}(315))=\mathcal{N}(240351)=351462
 \quad\text{and}\quad
 246\mathcal{C}(315)=246351.
 $$

Recall that $\widehat{A}_{n+1,k}$ counts the number of permutations in $\ms_{n+1}$ with $k$ alternating descents. For a permutation $\pi\in\bs_n$, it is routine to check the following facts in four different cases:
\begin{itemize}
\item[(1)] The min-inserting of $1$ in  the even alternating descent space or max-inserting of $(n+1)$ in the odd alternating descent space will decrease the  number of alternating descents by $1$. Therefore, if $\altdes(\pi)=k+1$, then there are $k+1$ ways to inserting $1$ or $n+1$ to obtain a permutation in $\ms_{n+1}$ with $k$ alternating descents.
\item[(2)] The min-inserting of $1$ in the odd alternating descent space or max-inserting of $(n+1)$ in the even alternating descent space will increase the  number of alternating descents by $1$. Also, the max-inserting of $(n+1)$ in the $0$-th space or when $n$ is odd (resp. even), the min-inserting of $1$ (resp. the max-inserting of $n+1$) in the $n$-th space will both increase the  number of alternating descents by $1$. Therefore, if $\altdes(\pi)=k-1$, then there are $k-1+2=k+1$ ways to inserting $1$ or $n+1$ to obtain a permutation in $\ms_{n+1}$ with $k$ alternating descents.
\item[(3)] The min-inserting of $1$ in  the odd alternating ascent space or max-inserting of $(n+1)$ in the even alternating ascent space will increase the  number of alternating descents by $2$. Therefore, if $\altdes(\pi)=k-2$, then there are $n-1-(k-2)=n-k+1$ ways to inserting $1$ or $n+1$ to obtain a permutation in $\ms_{n+1}$ with $k$ alternating descents.
\item[(4)]  The min-inserting of $1$ in the even alternating ascent space or max-inserting of $n+1$ in the odd alternating ascent space will preserve the  number of alternating descents. Also, the max-inserting of $1$ in the $0$-th space or when $n$ is even (resp. odd), the min-inserting of $1$ (resp. the max-inserting of $n+1$) in the $n$-th space will both preserve the  number of alternating descents by $1$. Therefore, if $\altdes(\pi)=k$, then there are $n-1-k+2=n-k+1$ ways to inserting $1$ or $n+1$ to obtain a permutation in $\ms_{n+1}$ with $k$ alternating descents.
\end{itemize}

Summarizing all the above four cases, we obtain~\eqref{recuAnk}, since every permutation in $\bs_{n+1}$ will be constructed exactly two times by min-inserting $1$ or max-inserting $n+1$.
\end{proof}

It is possible to calculate the generating function formula~\eqref{EGF-Anx}
$$
\widehat{A}(t;z):=1+\sum_{n\geq1}t\widehat{A}_n(t)\frac{z^n}{n!}
$$
from Theorem~\ref{new:rec} as follows.
Multiplying both sides of~\eqref{recuAnk} by $t^{k+1}z^n/n!$ and summing over $k\geq0$ and $n\geq1$, we obtain
$$\left(1-\frac{(1+t^2)z}{2}\right)\frac{d}{dz}\widehat{A}(t;z)-\frac{1}{2}(1-t)(1+t^2)\frac{d}{dt}\widehat{A}(t;z)
=\frac{(t-1)^2}{2t}+\frac{t^2+2t-1}{2t}\widehat{A}(t;z)$$
after some manipulation.
Then there exists a function $f$ such that the solution of this partial differential equation can be written as
$$\widehat{A}(t;z/(1-t))=\frac{1-t}{1+t^2}(1-tf(-z-2\arctan t)).$$
Note that $\widehat{A}(t,0)=1$, and so $f(-2\arctan t)=-(1+t)/(1-t)$, which is equivalent to $f(t)=\frac{\tan(t/2)-1}{\tan(t/2)+1}$. Hence,
$$\widehat{A}(t;z/(1-t))=\frac{1-t}{1+t^2}\left(1+t\frac{1+\tan(z/2+\arctan t)}{1-\tan(z/2+\arctan t)}\right),$$
which, by the identity $\tan(\alpha+\beta)=\frac{\tan\alpha+\tan\beta}{1-\tan\alpha\tan\beta}$, implies that
\begin{align*}
\widehat{A}(t;z/(1-t))
&=(1-t)\left(\frac{1-\tan(z/2)}{1-t-(1+t)\tan(z/2)}\right)\\
&=\frac{1-t}{1-t\frac{1+\tan(z/2)}{1-\tan(z/2)}}=\frac{1-t}{1-t(\sec z+\tan z)},
\end{align*}
as desired.

\subsection{A  quadratic  recursion}
The classical {\em Euler--Mahonian polynomials} $A_n(t,q)$ introduced by Carlitz~\cite{car} in 1954 are  the $(\maj, \des)$-$q$-Eulerian polynomials
$$
A_n(t,q):=\sum_{\pi\in\bs_n}t^{\des(\pi)}q^{\maj(\pi)}.
$$
With $A_n(t,q)=\sum_{k=0}^{n-1}A_{n,k}(q)t^k$, Carlitz~\cite{car2} proved that the coefficients $A_{n,k}(q)$ satisfy the recurrence
$$
A_{n+1,k}(q)=[k+1]_qA_{n,k}(q)+q^k[n-k+1]_qA_{n,k-1}(q),
$$
which is a $q$-analogue of~\eqref{rec:euler}.
We are unable to derive a similar recurrence formula for $\widehat{A}_n(t,q)$ that is a $q$-analogue of
~\eqref{recuAnk}.
But instead we can show combinatorially the following recursion for $\widehat{A}_n(t,q)$, which is similar to a  quadratic  recursion for $A_n(t,q)$ derived by Park in~\cite[Corollary 3.6]{park}.

\begin{theorem}The polynomials $\widehat{A}_n(t,q)$ satisfy  the quadratic recursion
\begin{multline}\label{rec:2}
2\widehat{A}_{n+1}(t,q)=(1+tq)\widehat{A}_n(tq,q)+(1+tq^n)\widehat{A}_n(t,q)+\\
+\sum_{i=1}^{n-1}(1+t^2q^{2i+1}){n\choose i}\widehat{A}_i(t,q)\widehat{A}_{n-i}(tq^{i+1},q),
\end{multline}
where $n\geq1$ and $\widehat{A}_1(t,q)=1$.
\end{theorem}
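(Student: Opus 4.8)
The plan is to give a bijective proof that refines the min-/max-inserting argument used for Theorem~\ref{new:rec}, now tracking the alternating major index as well. The factor $2$ on the left-hand side should again come from constructing each permutation of $\bs_{n+1}$ exactly twice: once by recording the position of the largest letter $n+1$, and once by recording the position of the smallest letter $1$. Concretely, I would fix $\sigma=\sigma_1\cdots\sigma_{n+1}\in\bs_{n+1}$, let $x\in\{1,n+1\}$ be the distinguished letter sitting at position $i+1$, and split $\sigma$ into the left block $L=\sigma_1\cdots\sigma_i$ and the right block $R=\sigma_{i+2}\cdots\sigma_{n+1}$. Summing $t^{\altdes}q^{\altmaj}$ over both choices of $x$ then yields $2\widehat{A}_{n+1}(t,q)$, and the task reduces to factoring the weight of each $\sigma$ as (left)$\,\cdot\,$(boundary)$\,\cdot\,$(right).

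For the left block, the alternating descents internal to $L$ occupy positions $1,\dots,i-1$, whose parities are unchanged under normalization, so $L$ contributes exactly $t^{\altdes(\mathcal{N}(L))}q^{\altmaj(\mathcal{N}(L))}$; summing over the $\binom{n}{i}$ choices of the value set of $L$ and over $\mathcal{N}(L)\in\bs_i$ produces $\binom{n}{i}\widehat{A}_i(t,q)$. For the right block I would use the complement $\mathcal{C}$ exactly as in the proof of Theorem~\ref{new:rec}. The internal alternating descents of $R$ sit at positions $i+2,\dots,n$, and shifting them down by $i+1$ sends each such position $p$ to $p-(i+1)$; consequently the alternating major index picks up an extra $i+1$ for every internal alternating descent, which is precisely what the substitution $t\mapsto tq^{i+1}$ encodes. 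The subtlety is parity: the shift preserves parity only when $i$ is odd, and when $i$ is even it swaps the roles of even-ascent and odd-descent. Applying $\mathcal{C}$ to $\mathcal{N}(R)$ in the even case swaps all ascents and descents and thereby restores the correct alternating-descent set; since $\mathcal{C}$ is an involution on $\bs_{n-i}$, summing over $\mathcal{N}(R)$ is the same as summing over $\mathcal{C}(\mathcal{N}(R))$, so in either parity the right block contributes $\widehat{A}_{n-i}(tq^{i+1},q)$.

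The heart of the argument, and the step I expect to be most delicate, is the boundary contribution coming from positions $i$ and $i+1$, which compare the extreme letter $x$ with its neighbours. Because $x$ is either $1$ or $n+1$, both comparisons are forced, so I would run a short case analysis on the parity of $i$ and on whether $x=n+1$ or $x=1$. The outcome I anticipate is that for $1\le i\le n-1$ exactly one of the two constructions contributes the empty boundary weight $1$ while the other contributes $t^2q^{2i+1}$, coming from two forced boundary alternating descents at positions $i$ and $i+1$ with $i+(i+1)=2i+1$; adding the two constructions then gives the factor $1+t^2q^{2i+1}$. The boundary cases $i=0$ and $i=n$, where one block is empty and only a single comparison survives, should analogously produce the prefactors $1+tq$ (with right-block shift $q$, hence $\widehat{A}_n(tq,q)$) and $1+tq^n$ (with $\widehat{A}_n(t,q)$). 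Assembling the three contributions and summing over $i$ then gives~\eqref{rec:2}, and I would finish by checking the base case $n=1$, where the empty sum leaves $2\widehat{A}_2(t,q)=(1+tq)+(1+tq)$, consistent with $\widehat{A}_2(t,q)=1+tq$. The main obstacle throughout is keeping the position-parity bookkeeping consistent between the two constructions; the complement $\mathcal{C}$ is exactly the device that reconciles the parity flip in the even-$i$ case.
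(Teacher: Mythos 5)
Your proposal is correct and takes essentially the same route as the paper's own proof: the paper likewise double-counts $\ms_{n+1}$ through two restricted polynomials $L_{n+1,i}(t,q)$ and $S_{n+1,i}(t,q)$ (recording the position $i+1$ of the letter $n+1$, respectively of the letter $1$), factors each permutation into left/right blocks via $\mathcal{N}$ and $\mathcal{C}$ with the shift $t\mapsto tq^{i+1}$ absorbing the $(i+1)\cdot\altdes$ contribution to $\altmaj$, obtains the boundary weights $1$ versus $t^2q^{2i+1}$ according to the parity of $i$, and treats $i=0,n$ separately to get the prefactors $1+tq$ and $1+tq^n$. If anything, your parity bookkeeping (applying $\mathcal{C}$ to the right block only when the shift flips parity, then invoking that $\mathcal{C}$ is an involution) is slightly more careful than the paper's uniform application of $\mathcal{C}$ in its bijection, a difference that is immaterial for the resulting identity.
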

\begin{proof}
For each $0\leq i\leq n$, consider two kinds of restricted alternating $q$-Eulerian polynomials:
$$
L_{n+1,i}(t,q):=\sum_{\pi\in\msnn\atop\pi(i+1)=n+1}t^{\altdes(\pi)}q^{\altmaj(\pi)}\quad\text{and}\quad S_{n+1,i}(t,q):=\sum_{\pi\in\msnn\atop\pi(i+1)=1}t^{\altdes(\pi)}q^{\altmaj(\pi)}.
$$
In view of the involution $\mathcal{C}$, we have
\begin{align*}
&L_{n+1,0}(t,q)=tq\widehat{A}_n(tq,q),\,\, L_{n+1,n}(t,q)=\widehat{A}_n(t,q),\\
 &S_{n+1,0}(t,q)=\widehat{A}_n(tq,q),\,\, S_{n+1,n}(t,q)=tq^n\widehat{A}_n(t,q)
\end{align*}
when $n+1$ is even and
\begin{align*}
&L_{n+1,0}(t,q)=tq\widehat{A}_n(tq,q),\,\,L_{n+1,n}(t,q)=tq^n\widehat{A}_n(t,q),\\
& S_{n+1,0}(t,q)=\widehat{A}_n(tq,q),\,\,S_{n+1,n}(t,q)=\widehat{A}_n(t,q)
\end{align*}
 when $n+1$ is odd.
Thus,
\begin{multline}\label{eq:twosides}
L_{n+1,0}(t,q)+L_{n+1,n}(t,q)+S_{n+1,0}(t,q)+S_{n+1,n}(t,q)\\=(1+tq)\widehat{A}_n(tq,q)+(1+tq^n)\widehat{A}_n(t,q),
\end{multline}
no matter when $n+1$ is even or odd.

Let $1\leq i\leq n-1$. There is a natural bijection $f$ between
 the set of permutations $\{\pi\in\msnn :\pi(i+1)=n+1\}$ and the set of triples
 $$\{(S,\pi',\pi'') : S\subseteq[n], |S|=i, \pi'\in\ms_{i}, \pi''\in\ms_{n-i-1}\}$$ defined as
 $$
 f(\pi)=(\{\pi_1,\pi_2,\ldots,\pi_{i}\},\mathcal{N}(\pi_1\pi_2\cdots\pi_{i}),\mathcal{N}(\mathcal{C}(\pi_{i+2}\pi_{i+3}\cdots\pi_{n+1}))),
 $$
 where $\mathcal{N}$ and $\mathcal{C}$ are the two operators introduced in the proof of Theorem~\ref{new:rec}. Moreover, this bijection satisfies that
 \begin{align*}
 \altdes(\pi)&=\altdes(\pi')+\altdes(\pi'')+2\cdot\chi(\text{$i$ even}),\\
 \altmaj(\pi)&=\altmaj(\pi')+\altmaj(\pi'')+(i+1)\cdot\altdes(\pi'')+ (2i+1)\cdot\chi(\text{$i$ even}),
 \end{align*}
 where $\chi(\mathsf{S})$ equals $1$, if the statement $\mathsf{S}$ is true; and $0$, otherwise.
 It follows that
 \begin{equation}\label{eq:L}
  L_{n+1,i}(t,q)=
 \begin{cases}
\,\,t^2q^{2i+1}{n\choose i}\widehat{A}_{i}(t,q)\widehat{A}_{n-i}(tq^{i+1},q)\qquad&\text{if $i$ is even;}\\
 \,\,{n\choose i}\widehat{A}_{i}(t,q)\widehat{A}_{n-i}(tq^{i+1},q)\qquad&\text{if $i$ is odd.}
 \end{cases}
 \end{equation}
 Similarly, we can show that
  \begin{equation}\label{eq:S}
 S_{n+1,i}(t,q)=
  \begin{cases}
\,\,t^2q^{2i+1}{n\choose i}\widehat{A}_{i}(t,q)\widehat{A}_{n-i}(tq^{i+1},q)\qquad&\text{if $i$ is odd;}\\
 \,\,{n\choose i}\widehat{A}_{i}(t,q)\widehat{A}_{n-i}(tq^{i+1},q)\qquad&\text{if $i$ is even.}
 \end{cases}
 \end{equation}

  Combining~\eqref{eq:twosides},~\eqref{eq:L} and~\eqref{eq:S} with
 $$
 2\widehat{A}_{n+1}(t,q)=\sum_{i=0}^{n} L_{n+1,i}(t,q)+\sum_{i=0}^{n} S_{n+1,i}(t,q)
 $$
 give~\eqref{rec:2},
 which completes the proof of the theorem.
\end{proof}

Applications of this  quadratic recursion will be given in the next two sections, including an elementary proof  that $(1+q)^{\lfloor n/2\rfloor}$ divides $\widehat{A}_n(1,q)$, which is a special case of Theorem~\ref{conj:altmaj}.

\section{Positivity properties}\label{Sec:3}

This section is devoted to the proof of Theorem~\ref{main:1} and some of its interesting consequences.

\begin{theorem}[First part of Theorem~\ref{main:1}]\label{unimodal}
The alternating Eulerian polynomial $\widehat{A}_n(t)$ is palindromic and unimodal for any $n\geq1$.
\end{theorem}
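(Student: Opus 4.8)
The plan is to prove both properties together by induction on $n$ directly from the five-term recurrence~\eqref{recuAnk}, the base case $n=1$ being immediate since $\widehat{A}_1(t)=1$. For palindromicity I would assume $\widehat{A}_{n,j}=\widehat{A}_{n,n-1-j}$ for all $j$ and read off $2\widehat{A}_{n+1,n-k}$ from~\eqref{recuAnk} by the substitution $k\mapsto n-k$:
\[
2\widehat{A}_{n+1,n-k}=(n-k+1)\bigl(\widehat{A}_{n,n-k+1}+\widehat{A}_{n,n-k-1}\bigr)+(k+1)\bigl(\widehat{A}_{n,n-k}+\widehat{A}_{n,n-k-2}\bigr).
\]
The inductive identities $\widehat{A}_{n,n-k+1}=\widehat{A}_{n,k-2}$, $\widehat{A}_{n,n-k-1}=\widehat{A}_{n,k}$, $\widehat{A}_{n,n-k}=\widehat{A}_{n,k-1}$ and $\widehat{A}_{n,n-k-2}=\widehat{A}_{n,k+1}$ then turn this right-hand side into $2\widehat{A}_{n+1,k}$, so that $\widehat{A}_{n+1,k}=\widehat{A}_{n+1,n-k}$ and $\widehat{A}_{n+1}(t)$ is palindromic of degree $n$.

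For unimodality, write $d_{n,k}:=\widehat{A}_{n,k}-\widehat{A}_{n,k-1}$; since $\widehat{A}_{n+1}$ is palindromic it suffices to show $d_{n+1,k}\geq 0$ for $1\leq k\leq\lfloor (n+1)/2\rfloor$. Subtracting the instance of~\eqref{recuAnk} at $k-1$ from that at $k$, collecting coefficients, and using $n-k+2=(n-2k+1)+(k+1)$, I would obtain the clean difference recurrence
\[
2d_{n+1,k}=(k+1)\bigl(\widehat{A}_{n,k+1}-\widehat{A}_{n,k-3}\bigr)+(n-2k+1)\bigl(d_{n,k}+d_{n,k-2}\bigr).
\]
Here the factor $n-2k+1$ is nonnegative for $k\leq (n+1)/2$, and the first difference telescopes as $\widehat{A}_{n,k+1}-\widehat{A}_{n,k-3}=d_{n,k+1}+d_{n,k}+d_{n,k-1}+d_{n,k-2}$.

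By the inductive unimodality of $\widehat{A}_n$ every $d_{n,j}$ with $j\leq\lfloor n/2\rfloor$ is nonnegative. Hence for $k\leq\lfloor n/2\rfloor-1$ all of $d_{n,k+1},d_{n,k},d_{n,k-1},d_{n,k-2}$ carry indices at most $\lfloor n/2\rfloor$ and the right-hand side is nonnegative term by term, giving $d_{n+1,k}\geq 0$. The only genuinely delicate index is $k=\lfloor n/2\rfloor$, the top of $\widehat{A}_n$, where the entry $d_{n,k+1}$ lies just past the center and is $\leq 0$. There I would invoke palindromicity once more to replace the single straddling coefficient $\widehat{A}_{n,k+1}$ by its mirror image in the increasing half; the combination $\widehat{A}_{n,k+1}-\widehat{A}_{n,k-3}$ then collapses to a short sum of nonnegative lower-half differences (for even $n$ it is exactly $d_{n,n/2-2}$, for odd $n$ it is $d_{n,(n-3)/2}+d_{n,(n-5)/2}$), and together with $(n-2k+1)(d_{n,k}+d_{n,k-2})\geq 0$ this forces $d_{n+1,\lfloor n/2\rfloor}\geq 0$. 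When $n$ is odd the remaining value $k=(n+1)/2$ gives $d_{n+1,k}=0$ automatically by palindromicity, which completes the induction.

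The main obstacle is precisely this behaviour near the center: the difference recurrence is \emph{not} nonnegative term by term there, because $\widehat{A}_{n,k+1}$ sits in the decreasing half of $\widehat{A}_n$. The decisive observation is that palindromicity sends this one offending coefficient back into the increasing half, turning the whole expression into a manifestly nonnegative combination of differences $d_{n,j}$ with $j<\lfloor n/2\rfloor$. The argument then splits only mildly according to the parity of $n$, each case reducing to the nonnegativity of a handful of lower-half differences already guaranteed by the induction hypothesis.
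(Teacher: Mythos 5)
Your proposal is correct, and its core---the inductive proof of unimodality---is essentially the paper's own argument: the paper also subtracts consecutive instances of~\eqref{recuAnk} and arrives at exactly your identity, written as
$2(\widehat{A}_{n+1,k}-\widehat{A}_{n+1,k-1})=(n+1-2k)(\widehat{A}_{n,k}+\widehat{A}_{n,k-2})-(n+1-2k)\widehat{A}_{n,k-1}+(k+1)\widehat{A}_{n,k+1}-(n-k+2)\widehat{A}_{n,k-3}$,
which is your difference recurrence after splitting $n-k+2=(n-2k+1)+(k+1)$. You differ in two places. First, for palindromicity the paper does not induct on the recurrence at all: it simply invokes the complement involution $\mathcal{C}$ on $\ms_n$, which sends $\altdes$ to $n-1-\altdes$ and gives the symmetry in one line; your substitution $k\mapsto n-k$ in~\eqref{recuAnk} is a valid, purely recurrence-based alternative, at the cost of some index bookkeeping. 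Second, your explicit handling of the center index $k=\lfloor n/2\rfloor$ spells out what the paper compresses: the paper's single inequality step silently uses $\widehat{A}_{n,k+1}\geq\widehat{A}_{n,k-3}$ (justified only by the words ``induction hypothesis''), and near the center this requires precisely the mirrored-index argument $\widehat{A}_{n,k+1}=\widehat{A}_{n,n-k-2}$ that you carry out, including the parity split. So your write-up is more detailed at the one delicate point, while the paper's use of $\mathcal{C}$ is the cleaner route to the palindromic half.
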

\begin{proof}
The involution $\mathcal{C}: \ms_n\rightarrow\ms_n$
 shows that $\widehat{A}_n(t)$ is palindromic. The unimodality of $\widehat{A}_n(t)$ is an easy consequence of Theorem~\ref{new:rec}.
We will prove this  by induction on $n$. By recurrence~\eqref{recuAnk}, for $0\leq k\leq \lfloor n/2\rfloor$, we have
\begin{align*}
&2(\widehat{A}_{n+1,k}-\widehat{A}_{n+1,k-1})\\
=&(n+1-2k)(\widehat{A}_{n,k}+\widehat{A}_{n,k-2})\\
&-(n+1-2k)\widehat{A}_{n,k-1}+(k+1)\widehat{A}_{n,k+1}-(n-k+2)\widehat{A}_{n,k-3}\\
\geq&(n+1-2k)(\widehat{A}_{n,k}+\widehat{A}_{n,k-2}-\widehat{A}_{n,k-1}-\widehat{A}_{n,k-3})
\geq0,
\end{align*}
where the inequalities follow from the induction hypothesis. This completes the proof of the theorem by induction.
\end{proof}

An index $i$, $2\leq i\leq n-1$, is a {\em double descent} of $\pi\in\ms_n$ if $\pi_{i-1}>\pi_i>\pi_{i+1}$. As introduced by Simion and Sundaram~\cite{sun}, a permutation in $\ms_n$ is called a {\em Simsun permutation} if it has no double descents, even after removing $n,n-1,\ldots,k$ for any $k$.
Let
$$
R(x):=\sum_{\pi\in RS_n} x^{\des(\pi)},
$$
where $RS_n$ is the set of all Simsun permutations in $\ms_n$. Chow and Shiu~\cite{CS} showed that $R(x)$ satisfy the recurrence formula
\begin{equation}\label{rec:sim}
R_n(x)=((n-1)x+1)R_{n-1}(x)+x(1-2x)R'_{n-1}(x).
\end{equation}
We have the following relationship between the $\gamma$-polynomial of $\widehat{A}_n(t)$ and $R_n(x)$.

\begin{theorem}[Second part of Theorem~\ref{main:1}]\label{th:sim} For $n\geq1$,  $a_n(x)=R_{n-1}(x+1)$.
\end{theorem}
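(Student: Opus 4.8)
The plan is to prove the identity $a_n(x) = R_{n-1}(x+1)$ by establishing that both sides satisfy the same recurrence relation with the same initial condition. Since uniqueness of a first-order recursion with a given initial value will pin down the answer, the strategy reduces to two tasks: first, translate the known Simsun recurrence~\eqref{rec:sim} for $R_{n-1}(x)$ into a recurrence for the shifted polynomial $R_{n-1}(x+1)$; second, derive an independent recurrence for $a_n(x)$ directly from the five-term recurrence~\eqref{recuAnk} of Theorem~\ref{new:rec}, via the defining expansion of $\widehat{A}_n(t)$ in the basis $\{(-2t)^k(1+t)^{n-1-2k}\}$.

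First I would handle the Simsun side. Setting $y = x+1$ in~\eqref{rec:sim}, so that $R_{n-1}(x+1)$ is viewed as a polynomial in $x$, I would compute how the operators $x\mapsto x+1$ and differentiation interact: if $g_n(x) := R_n(x+1)$, then $g_n'(x) = R_n'(x+1)$, and substituting $x\mapsto x+1$ throughout~\eqref{rec:sim} yields a clean recurrence for $g_n$ in terms of $g_{n-1}$ and $g_{n-1}'$. This is a routine but essential bookkeeping step; the coefficients $((n-1)x+1)$ and $x(1-2x)$ become $((n-1)(x+1)+1) = (n-1)x + n$ and $(x+1)(1-2(x+1)) = (x+1)(-1-2x)$ respectively.

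The heart of the argument is deriving the matching recurrence for $a_n(x)$. Here I would substitute the expansion $\widehat{A}_n(t) = \sum_k a(n,k)(-2t)^k(1+t)^{n-1-2k}$ into the five-term recurrence~\eqref{recuAnk}. The recurrence~\eqref{recuAnk} is most naturally read as an identity among the full polynomials rather than individual coefficients: summing~\eqref{recuAnk} against $t^k$ and using the palindromicity gives a differential-type relation for $\widehat{A}_{n+1}(t)$ in terms of $\widehat{A}_n(t)$ and $t\frac{d}{dt}\widehat{A}_n(t)$, which is exactly the shape one expects to convert into a relation for the $\gamma$-polynomials $a_n$. Concretely, I would express $\widehat{A}_{n+1}(t)$ via the basis, equate with the image of the operator applied to $\widehat{A}_n(t)$, and then collect the coefficient of each basis element $(-2t)^k(1+t)^{n-2k}$ to extract a recurrence for $a_{n+1}(x)$ in $a_n(x)$ and $a_n'(x)$. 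The cleanest route is probably to substitute $t = (x-1)/\text{(something)}$ or equivalently use the change of variable that sends $\widehat{A}_n(t)$ to $(1+t)^{n-1}a_n\bigl(\tfrac{-2t}{(1+t)^2}\bigr)$, though in practice one verifies the recurrences coincide after the change of variables.

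The main obstacle will be the differential-operator bookkeeping in this final step: one must correctly track how multiplying by $t$, the shift $t\mapsto tq$ (here with $q=1$, so just the identity on that axis), and the operator $t\frac{d}{dt}$ act on the basis $(-2t)^k(1+t)^{n-1-2k}$, and confirm that the resulting recurrence for $a_n(x)$ matches the shifted Simsun recurrence for $g_{n-1}(x)$ term by term. The matching of initial conditions is immediate, since $\widehat{A}_1(t)=1$ gives $a_1(x)=1$ and $R_0(x+1)=1$. Once both recurrences are shown identical and the base cases agree, induction on $n$ finishes the proof.
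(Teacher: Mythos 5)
Your proposal is correct and takes essentially the same route as the paper's own (first) proof: the paper also sums the five-term recurrence against $t^k$ to get the differential relation $2\widehat{A}_{n+1}(t)=(2t+n+1+nt^2-t^2)\widehat{A}_n(t)+(1-t)(1+t^2)\widehat{A}'_n(t)$, substitutes $\widehat{A}_n(t)=(1+t)^{n-1}a_n(x)$ with $x=\tfrac{-2t}{(1+t)^2}$ to extract $a_{n+1}(x)=(n+(n-1)x)a_n(x)-(1+x)(1+2x)a_n'(x)$, and matches this against the shifted Simsun recurrence exactly as you outline. The only cosmetic difference is that palindromicity is needed to justify the existence of the expansion of $\widehat{A}_n(t)$ in the basis $\{(-2t)^k(1+t)^{n-1-2k}\}$, not for the summation step itself.
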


\begin{proof} It follows from~\eqref{recuAnk} that
\begin{equation}\label{eq:a1}
2\widehat{A}_{n+1}(t)=(2t+n+1+nt^2-t^2)\widehat{A}_n(t)+(1-t)(1+t^2)\widehat{A}'_n(t).
\end{equation}
By the definition of $a_n(x)$, we have
\begin{equation}\label{eq:a2}
\widehat{A}_{n}(t)=(1+t)^{n-1}a_n(x),
\end{equation}
where $x=\frac{-2t}{(1+t)^2}$. Differentiating both sides yields
\begin{equation}\label{eq:a3}
\widehat{A}'_{n}(t)=(n-1)(1+t)^{n-2}a_n(x)+2(t-1)(1+t)^{n-4}\frac{d}{dx}a_n(x).
\end{equation}
Substituting~\eqref{eq:a2} and~\eqref{eq:a3} into~\eqref{eq:a1}, we get
\begin{align*}
a_{n+1}(x)=\biggl(\frac{nt^2+n+2t}{(1+t)^2}\biggr)a_n(x)-\frac{(1-t)^2(1+t^2)}{(1+t)^4}\frac{d}{dx}a_n(x).
\end{align*}
Since $\frac{nt^2+n+2t}{(1+t)^2}=n+(n-1)x$ and $\frac{(1-t)^2(1+t^2)}{(1+t)^4}=(1+x)(1+2x)$, we have
\begin{equation}\label{rec:gam}
a_{n+1}(x)=(n+(n-1)x)a_n(x)-(1+x)(1+2x)a_n'(x).
\end{equation}
Comparing with recursion~\eqref{rec:sim} for $R_n(x)$, we conclude $a_{n+1}(x)=R_n(x+1)$, as desired.
\end{proof}

\begin{remark}
Recursion~\eqref{rec:gam} is equivalent to
$$
a(n+1,k)=-(k+1)a(n,k+1)+(n-3k)a(n,k)+(n-2k+1)a(n,k-1)
$$
and so we have $a(n,1)=nE_n-E_{n+1}$. Note that the numbers $a(n,0)+a(n,1)$ appear as A034428 in~\cite{oeis}. Surprisingly, the nonnegativity of $a(n,k)$ does not follow directly from the above recursion.
\end{remark}

Theorem~\ref{th:sim} can also be proved by the {\bf cd}-index of $\ms_n$. For a subset $S\subseteq[n-1]$, define the monomial $u_S=u_1u_2\cdots u_{n-1}$ in two non-commuting variables $\bfa$ and $\bfb$ by:
$$
u_i=
\begin{cases}
\,\,\bfa \qquad\text{if $i\notin S$,}\\
\,\,\bfb \qquad\text{if $i\in S$.}
\end{cases}
$$
Consider the {\bf ab}-index of $\ms_n$ with respect to descent set statistic
$$
\Psi_n(\bfa,\bfb)=\sum_{\pi\in\ms_n} u_{D(\pi)},
$$
where $D(\pi)$ is the set of descents of $\pi$.

Let $SS_n:=\{\pi\in RS_n: \pi(n)=n\}$. For a Simsun permutation $\pi\in SS_n$, define the monomial $cd(\pi)$ in non-commuting variables $\bfc,\bfd$ as follows:  write out the monomial $u_{D(\pi)}$, and then replace each occurrence of adjacency $\bfb\bfa$ by $\bfd$, and each remaining $\bfa$ by $\bfc$. This definition is valid because a Simsun permutation has no double descents. For instance, if $\pi=423516\in SS_6$, then $u_{D(\pi)}=\bfb\bfa\bfa\bfb\bfa$ and so $cd(\pi)=\bfd\bfc\bfd$. The {\bf ab}-index of $\ms_n$ has the expression in terms of Simsun permutations:
\begin{equation}\label{eq:cd}
\Phi_n(\bfc,\bfd)=\sum_{\pi\in SS_n} cd(\pi).
\end{equation}
It is a classical result (cf.~\cite[Theorem~6.3]{Stanley}) that
$$
\Psi_n(\bfa,\bfb)=\Phi_n(\bfa+\bfb,\bfa\bfb+\bfb\bfa).
$$

In~\cite{Chebikin08}, Chebikin studied the {\bf ab}-index of $\ms_n$ with respect to the alternating descent set statistic:
$$
\widehat{\Psi}_n(\bfa,\bfb)=\sum_{\pi\in\ms_n} u_{\widehat{D}(\pi)}
$$
and observed the following relationship.
\begin{proposition}[Chebikin~\cite{Chebikin08}]\label{pro:che}
The polynomial $\widehat{\Psi}_n(\bfa,\bfb)$ can be written as
$$
\widehat{\Psi}_n(\bfa,\bfb)=\widehat{\Phi}_n(\bfa+\bfb,\bfa\bfb+\bfb\bfa),
$$
where $\widehat{\Phi}_n(\bfc,\bfd)=\Phi_n(\bfc,\bfc^2-\bfd)$.
\end{proposition}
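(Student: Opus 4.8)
The plan is to realize $\widehat{\Psi}_n$ as the image of $\Psi_n$ under an explicit, position-dependent involution of the ab-polynomial ring, and then to track how that involution interacts with the cd-substitution. First I would record the combinatorial relationship between the two descent sets: for $\pi\in\ms_n$ and an index $i\in[n-1]$, the definition of $\widehat{D}(\pi)$ gives $i\in\widehat{D}(\pi)\iff i\in D(\pi)$ when $i$ is odd, while $i\in\widehat{D}(\pi)\iff i\notin D(\pi)$ when $i$ is even. In terms of the monomials $u_S$ this says that $u_{\widehat{D}(\pi)}$ is obtained from $u_{D(\pi)}$ by interchanging $\bfa\leftrightarrow\bfb$ in every even position. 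Extending this letter operation linearly defines a map $\theta$ on homogeneous ab-polynomials (swap $\bfa\leftrightarrow\bfb$ at even positions), and summing over $\pi\in\ms_n$ yields $\widehat{\Psi}_n=\theta(\Psi_n)$.

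The heart of the argument is to compute the effect of $\theta$ on the image of a single cd-monomial under the substitution $\bfc=\bfa+\bfb$, $\bfd=\bfa\bfb+\bfb\bfa$. Such a cd-monomial partitions the $n-1$ positions into consecutive blocks, each either a single slot (a $\bfc$) or a pair of adjacent slots (a $\bfd$). The key observation is that $\theta$ acts on each block independently of its absolute position: since $\bfc=\bfa+\bfb$ is symmetric, $\theta$ fixes every $\bfc$-block whether its slot is even or odd; and since the two slots of any $\bfd$-block have opposite parities, exactly one of them is swapped, so $\bfa\bfb+\bfb\bfa$ becomes $\bfa\bfa+\bfb\bfb=\bfc^2-\bfd$, again regardless of the parity of the block's starting slot. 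Consequently $\theta$ transforms each cd-monomial by the global substitution $\bfc\mapsto\bfc$, $\bfd\mapsto\bfc^2-\bfd$.

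Putting these together, I would invoke the classical identity $\Psi_n(\bfa,\bfb)=\Phi_n(\bfa+\bfb,\bfa\bfb+\bfb\bfa)$ to write
\begin{align*}
\widehat{\Psi}_n(\bfa,\bfb)
&=\theta\bigl(\Phi_n(\bfa+\bfb,\,\bfa\bfb+\bfb\bfa)\bigr)
=\Phi_n\bigl(\bfa+\bfb,\,(\bfa+\bfb)^2-(\bfa\bfb+\bfb\bfa)\bigr)\\
&=\widehat{\Phi}_n(\bfa+\bfb,\,\bfa\bfb+\bfb\bfa),
\end{align*}
where the last equality is the defining relation $\widehat{\Phi}_n(\bfc,\bfd)=\Phi_n(\bfc,\bfc^2-\bfd)$. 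This is exactly the asserted identity.

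The main obstacle I anticipate is justifying that the position-dependent map $\theta$ really acts block-multiplicatively on cd-monomials. Because $\theta$ is defined through absolute parities, it is \emph{not} multiplicative on arbitrary ab-words: the parity seen by a factor shifts according to the total length of everything preceding it. What rescues the argument is precisely the rigidity of the two cd-blocks---a symmetric length-one block and a length-two block spanning one even and one odd slot---each of which transforms in a way insensitive to that shift. To make this rigorous I would introduce both the even-swap $\theta$ and the odd-swap $\theta'$, verify directly that each sends $\bfc$ to $\bfc$ and $\bfd$ to $\bfc^2-\bfd$, and then induct on the number of blocks so that the parity ambiguity carried into each successive factor is absorbed by one of these two identical transformations.
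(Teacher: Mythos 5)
Your proof is correct: the even-position swap $\theta$ sends $u_{D(\pi)}$ to $u_{\widehat{D}(\pi)}$, fixes every $\bfc=\bfa+\bfb$ block, and turns every $\bfd=\bfa\bfb+\bfb\bfa$ block into $\bfa\bfa+\bfb\bfb=\bfc^2-\bfd$ irrespective of the parity of the block's starting slot, which is exactly what is needed to convert the classical identity $\Psi_n(\bfa,\bfb)=\Phi_n(\bfa+\bfb,\bfa\bfb+\bfb\bfa)$ into the claimed one. Note that the paper itself gives no proof of this proposition (it is imported from Chebikin with a citation), and your argument is essentially the one in Chebikin's original paper; your closing point about $\theta$ being only block-wise rather than globally multiplicative is precisely the subtlety that needs to be, and is, handled.
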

\begin{proof}[{\bf Second proof of Theorem~\ref{th:sim}}]  Note that $\widehat{A}_{n}(t)=\widehat{\Phi}_n(1+t,2t)$.  By Proposition~\ref{pro:che}, we have $\widehat{\Phi}_n(\bfc,\bfd)=\Phi_n(\bfc,\bfc^2-\bfd)$ and so every monomial in $\widehat{\Phi}_n(\bfc,\bfd)$ where $\bfd$ appears $k$ times is of the form $(-2t)^{k}(1+t)^{n-1-2k}$ after setting $\bfc=1+t$ and $\bfd=2t$. Therefore, $a(n,k)$ equals the  coefficient $x^k$ in $\widehat{\Phi}_n(1,1+x)$. In view of~\eqref{eq:cd}, $\widehat{\Phi}_n(1,1+x)=R_{n-1}(1+x)$ and the result follows.
\end{proof}
\begin{corollary}
For $n\geq0$, we have
$$
\widehat{A}_{2n+1}(-1)=2^na_{2n+1,n}=E_{2n+1}.
$$
Consequently, the number of down-up Simsun permutations of length $2n$ is $\frac{E_{2n+1}}{2^n}$, which is the $n$-th reduced tangent number appears as~\cite[A002105]{oeis}.
\end{corollary}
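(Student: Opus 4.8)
The plan is to prove the two displayed equalities separately and then read off the combinatorial consequence. The equality $\widehat{A}_{2n+1}(-1)=2^na(2n+1,n)$ comes essentially for free from the expansion in Theorem~\ref{main:1}. Taking the index to be $2n+1$, that expansion reads $\widehat{A}_{2n+1}(t)=\sum_{k=0}^{n}a(2n+1,k)(-2t)^k(1+t)^{2n-2k}$. Substituting $t=-1$, every summand carries the factor $(1+t)^{2n-2k}$, which vanishes unless $2n-2k=0$; hence only the $k=n$ term survives and $\widehat{A}_{2n+1}(-1)=a(2n+1,n)\bigl(-2(-1)\bigr)^n=2^na(2n+1,n)$.

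For the second equality $\widehat{A}_{2n+1}(-1)=E_{2n+1}$ I would specialize the generating function~\eqref{EGF-Anx} at $t=-1$. Writing $\widehat{A}(t;z)=1+\sum_{n\ge1}t\widehat{A}_n(t)z^n/n!$, the value $t=-1$ turns the right-hand side into $2/(1+\sec(2z)+\tan(2z))$. Using the double-angle identities $1+\cos(2z)=2\cos^2 z$, $\sin(2z)=2\sin z\cos z$ and $\cos(2z)=(\cos z+\sin z)(\cos z-\sin z)$ one simplifies $1+\sec(2z)+\tan(2z)=2\cos z/(\cos z-\sin z)$, so that $\widehat{A}(-1;z)=(\cos z-\sin z)/\cos z=1-\tan z$. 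Comparing this with $\widehat{A}(-1;z)=1-\sum_{n\ge1}\widehat{A}_n(-1)z^n/n!$ gives $\sum_{n\ge1}\widehat{A}_n(-1)z^n/n!=\tan z$; since $\tan z=\sum_{n\ge0}E_{2n+1}z^{2n+1}/(2n+1)!$, we conclude $\widehat{A}_{2n}(-1)=0$ and $\widehat{A}_{2n+1}(-1)=E_{2n+1}$, as wanted.

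For the last assertion I would combine the two equalities to get $a(2n+1,n)=E_{2n+1}/2^n$ and interpret the left side through Theorem~\ref{th:sim}. By that theorem $a_{2n+1}(x)=R_{2n}(x+1)$, and since a Simsun permutation has no double descents, $R_{2n}(x)=\sum_{\pi\in RS_{2n}}x^{\des(\pi)}$ has degree exactly $n$; a Simsun permutation attaining $n$ descents must have its descents at the odd positions $1,3,\dots,2n-1$, i.e.\ be down-up. The top coefficient is invariant under the shift $x\mapsto x+1$, so $a(2n+1,n)=[x^n]R_{2n}(x+1)=[x^n]R_{2n}(x)$ counts precisely the down-up Simsun permutations of length $2n$; this number therefore equals $E_{2n+1}/2^n$, which is the $n$-th reduced tangent number.

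The main obstacle is the trigonometric collapse of the specialized generating function to $1-\tan z$: it is the only part that is genuine computation rather than bookkeeping, and one must arrange the double-angle substitutions so that the common factor $\cos z+\sin z$ cancels cleanly. A secondary point worth verifying is that $R_{2n}(x)$ really has degree $n$, so that the substitution $x\mapsto x+1$ cannot feed lower-degree terms into the top coefficient; this is exactly the no-double-descent property, which forces the unique alternating descent pattern at the maximal descent number.
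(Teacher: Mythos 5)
Your proposal is correct and follows essentially the same route as the paper: specialize the generating function~\eqref{EGF-Anx} at $t=-1$ to get $\tan z$ (hence $\widehat{A}_{2n+1}(-1)=E_{2n+1}$), note that only the $k=n$ term of the expansion in Theorem~\ref{main:1} survives at $t=-1$, and identify $a(2n+1,n)$ with the count of down-up Simsun permutations via Theorem~\ref{th:sim}. The only difference is that you spell out the trigonometric simplification and the top-coefficient argument, which the paper compresses into a single displayed identity and the phrase ``the result then follows from Theorem~\ref{th:sim}.''
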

\begin{proof}
Setting $t=-1$ in~\eqref{EGF-Anx} gives
$$
\sum_{n\geq 1}\widehat{A}_{n}(-1)\frac{z^n}{n!}=\frac{\tan(2z)\cos(2z)-\cos(2z)+1}{\tan(2z)\cos(2z)+\cos(2z)+1}=\tan(z)=\sum_{n\geq0}\frac{E_{2n+1}z^{2n+1}}{(2n+1)!}.
$$
The result then follows from Theorem~\ref{th:sim}.
\end{proof}
Another interesting consequence of Theorem~\ref{th:sim} is the following new quadratic recursion for $R_n(x)$, the descent polynomials of Simsun permutations.
\begin{corollary}
For $n\geq1$, we have
\begin{equation}\label{qua:simsun}
R_{n+1}(x)=R_n(x)+x\sum_{i=1}^n{n\choose i}R_{i-1}(x)R_{n-i}(x).
\end{equation}
\end{corollary}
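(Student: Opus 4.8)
The plan is to derive \eqref{qua:simsun} by specializing the quadratic recursion \eqref{rec:2} at $q=1$ and then translating the resulting identity for $\widehat{A}_n(t)$ into the $\gamma$-variable via Theorem~\ref{th:sim}. First I would set $q=1$ in \eqref{rec:2}: since each $\widehat{A}_{n-i}(tq^{i+1},q)$ and $\widehat{A}_n(tq,q)$ collapses to $\widehat{A}_n(t)$, the two boundary terms coincide, and every prefactor $(1+t^2q^{2i+1})$ becomes $(1+t^2)$. This yields the clean relation
\begin{equation*}
2\widehat{A}_{n+1}(t)=2(1+t)\widehat{A}_n(t)+(1+t^2)\sum_{i=1}^{n-1}\binom{n}{i}\widehat{A}_i(t)\widehat{A}_{n-i}(t).
\end{equation*}

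Next I would change variables. By \eqref{eq:a2} and Theorem~\ref{th:sim} we have $\widehat{A}_m(t)=(1+t)^{m-1}R_{m-1}(y)$, where $y:=x+1=\frac{1+t^2}{(1+t)^2}$. Substituting this for every factor above, the left-hand side becomes $2(1+t)^nR_n(y)$, the first term on the right becomes $2(1+t)^nR_{n-1}(y)$, and each summand carries a common factor $(1+t)^{n-2}$, while the prefactor $(1+t^2)$ combines with $(1+t)^{-2}$ to produce exactly $y$. After dividing through by $2(1+t)^n$ I expect to obtain
\begin{equation*}
R_n(y)=R_{n-1}(y)+\frac{y}{2}\sum_{i=1}^{n-1}\binom{n}{i}R_{i-1}(y)R_{n-i-1}(y).
\end{equation*}
Since $y=\frac{1+t^2}{(1+t)^2}$ is a non-constant rational function of $t$, it takes infinitely many values, so this is a genuine polynomial identity in $y$; renaming $y$ as $x$ and shifting $n\mapsto n+1$ gives a recursion of the same shape as \eqref{qua:simsun} but with $\binom{n+1}{i}$ and an extra factor $\tfrac{1}{2}$.

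The final step, and the only genuinely fiddly one, is to reconcile this with the stated form. I would apply Pascal's rule $\binom{n+1}{i}=\binom{n}{i}+\binom{n}{i-1}$ to split the sum, reindex the $\binom{n}{i-1}$-part by $i\mapsto n+1-i$, and use the symmetry $R_{i-1}(x)R_{n-i}(x)=R_{n-i}(x)R_{i-1}(x)$ (mere commutativity) together with $\binom{n}{n-i}=\binom{n}{i}$ to see that the two halves are equal. This absorbs the factor $\tfrac{1}{2}$ and replaces $\binom{n+1}{i}$ by $\binom{n}{i}$, yielding \eqref{qua:simsun} exactly. The main obstacle here is not conceptual but bookkeeping: keeping the powers of $(1+t)$ and the index ranges aligned through the substitution, and carrying out the Pascal-plus-reindexing manipulation correctly; everything else is immediate from \eqref{rec:2} and Theorem~\ref{th:sim}.
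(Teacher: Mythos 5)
Your proof is correct and follows essentially the same route as the paper's: specialize the quadratic recursion~\eqref{rec:2} at $q=1$, pass to the $\gamma$-variable via~\eqref{eq:a2} and Theorem~\ref{th:sim}, and then halve the resulting $\binom{n+1}{i}$-sum using Pascal's rule together with the reindexing $i\mapsto n+1-i$. The only difference is one of explicitness: the paper states the intermediate identity $2R_{n+1}(x)=2R_n(x)+x\sum_{i=1}^n\binom{n+1}{i}R_{i-1}(x)R_{n-i}(x)$ and merely remarks that it ``can be rewritten'' as~\eqref{qua:simsun}, whereas you carry out that Pascal-plus-reindexing step in detail.
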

\begin{proof} Setting $q=1$ in~\eqref{rec:2} gives
$$
2\widehat{A}_{n+1}(t)=2(1+t)\widehat{A}_n(t)+\sum_{i=1}^{n-1}(1+t^2){n\choose i}\widehat{A}_i(t)\widehat{A}_{n-i}(t).
$$
Plugging~\eqref{eq:a2} into the above recursion yields
$$
2a_{n+1}(x)=2a_n(x)+(1+x)\sum_{i=1}^{n-1}{n\choose i}a_i(x)a_{n-i}(x),
$$
where $x=\frac{-2t}{(1+t)^2}$.  In view of  Theorem~\ref{th:sim}, this is equivalent to
$$
2R_{n+1}(x)=2R_n(x)+x\sum_{i=1}^n{n+1\choose i}R_{i-1}(x)R_{n-i}(x),
$$
which can be  rewritten as~\eqref{qua:simsun}.
\end{proof}


\section{Divisibility properties}\label{Sec:4}
This section deals with the divisibility properties of the alternating descent polynomials, including the proof of Theorem~\ref{conj:altmaj} and a stronger combinatorialization conjecture.
\subsection{Proof of Theorem~\ref{conj:altmaj}}
For convenience, we set
$$
G_n=\prod_{k=1}^{\lfloor\log_2 n\rfloor}\prod_{i=1}^{\lfloor n/2^k \rfloor}(1+q^i).
$$
We begin with a general equivalence regarding divisibility of a polynomial by $G_n$.

 \begin{theorem}\label{equi:Gn}
 For a fixed integer $n\geq1$ and a polynomial $f(q)\in\mathbb{Z}[q]$,
 \begin{equation}
 f(q)\in G_n\mathbb{Z}[q]\Longleftrightarrow f(q)\in (1+q^m)^{\lfloor n/2m\rfloor}\mathbb{Z}[q] \text{ for all $1\leq m\leq \lfloor n/2\rfloor$}.
 \end{equation}
 \end{theorem}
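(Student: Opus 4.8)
The plan is to pass from divisibility of polynomials to a pointwise comparison of cyclotomic multiplicities. Both $G_n$ and each $(1+q^m)^{\lfloor n/2m\rfloor}$ are monic with integer coefficients and factor into cyclotomic polynomials $\Phi_d$. Since a monic integer polynomial divides $f$ in $\mathbb{Z}[q]$ if and only if it divides $f$ in $\mathbb{Q}[q]$ (the division algorithm with a monic divisor preserves integrality), and since the $\Phi_d$ are the relevant irreducibles in the UFD $\mathbb{Q}[q]$, I would write $\nu_d(P)$ for the exponent of $\Phi_d$ in a product of cyclotomics $P$ and $\nu_d(f)$ for the multiplicity of $\Phi_d$ in $f$. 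Then $G_n\mid f\iff\nu_d(f)\ge\nu_d(G_n)$ for all $d$, while the right-hand condition holds $\iff\nu_d(f)\ge\max_{1\le m\le\lfloor n/2\rfloor}\nu_d\big((1+q^m)^{\lfloor n/2m\rfloor}\big)$ for all $d$. Thus the theorem reduces to the numerical identity
\[
\nu_d(G_n)=\max_{1\le m\le\lfloor n/2\rfloor}\nu_d\big((1+q^m)^{\lfloor n/2m\rfloor}\big)\qquad\text{for every }d\ge1.
\]

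Next I would pin down which $\Phi_d$ divide $1+q^i$. From $1+q^i=\frac{1-q^{2i}}{1-q^i}=\prod_{d\mid 2i,\,d\nmid i}\Phi_d$, writing $d=2^a b$ with $b$ odd, one checks that $\Phi_d\mid 1+q^i$ exactly when $a=v_2(i)+1$ and $b\mid i$; in particular only even $d$ ever occur, and each such $\Phi_d$ divides $1+q^i$ at most once. For the right-hand side, fix even $d=2^a b$. The exponents $m$ with $\Phi_d\mid 1+q^m$ are precisely $m=2^{a-1}bt$ with $t$ odd, and $\lfloor n/2m\rfloor$ is largest for the smallest such $m=2^{a-1}b$, giving $\lfloor n/(2^a b)\rfloor=\lfloor n/d\rfloor$. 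This smallest $m$ lies in $1\le m\le\lfloor n/2\rfloor$ precisely when $\lfloor n/d\rfloor\ge1$, so the empty-max case agrees with $\lfloor n/d\rfloor=0$; for odd $d$ the max is $0$.

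It then remains to show $\nu_d(G_n)=\lfloor n/d\rfloor$ for even $d$ (and $0$ for odd $d$, which is immediate). Here $\nu_d(G_n)$ counts pairs $(k,i)$ with $1\le k\le\lfloor\log_2 n\rfloor$, $1\le i\le\lfloor n/2^k\rfloor$, and $\Phi_d\mid 1+q^i$. Substituting $i=2^{a-1}bt$ with $t$ odd and $s=k+a-1$, the constraint $2^k i\le n$ becomes $2^s b t\le n$ with $s\ge a$, so
\[
\nu_d(G_n)=\#\{(s,t):s\ge a,\ t\ge1\text{ odd},\ 2^s b t\le n\}.
\]
The assignment $(s,t)\mapsto r=2^{\,s-a}t$ is a bijection onto $\{r\ge1:2^a b r\le n\}$, since every $r$ factors uniquely as $2^{\,v_2(r)}\cdot(\text{odd part})$ and $2^s b t=2^a b r=dr$. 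Hence $\nu_d(G_n)=\#\{r:1\le r\le\lfloor n/d\rfloor\}=\lfloor n/d\rfloor$, matching the right-hand side and establishing the identity, whence the theorem.

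The main obstacle is this last step: recognizing that the double product defining $G_n$, once ``$\Phi_d\mid 1+q^i$'' is translated into conditions on the $2$-adic valuation and odd part of $i$, collapses under the substitution $(k,i)\mapsto(s,t)\mapsto r$ into the single count $\lfloor n/d\rfloor$. Setting up the clean cyclotomic factorization of $1+q^i$ and handling the boundary case where the optimal $m$ falls outside $[1,\lfloor n/2\rfloor]$ are the only other points needing care; everything else is the routine reduction to a comparison of multiplicities in a UFD.
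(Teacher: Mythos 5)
Your proposal is correct and takes essentially the same route as the paper: both arguments reduce the theorem to the cyclotomic factorization $G_n=\prod_{m=1}^{n}\Phi_{2m}(q)^{\lfloor n/2m\rfloor}$ (your identity $\nu_d(G_n)=\lfloor n/d\rfloor$ for even $d$, proved by the same collapse of the pair-count onto multiples of $d$), combined with the factorization $1+q^m=\prod_{d\mid m,\,2d\nmid m}\Phi_{2d}(q)$. The only cosmetic differences are that the paper expresses multiplicities as orders of vanishing at the roots $\zeta_{2m}^{2r+1}$ whereas you track $\Phi_d$-exponents in $\mathbb{Q}[q]$, and you spell out the monic-divisor reduction from $\mathbb{Z}[q]$ to $\mathbb{Q}[q]$, which the paper leaves implicit.
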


Before the proof of Theorem~\ref{equi:Gn}, we need some preparations. Throughout this section we let $\zeta_n=e^{2\pi i/n}$. For polynomials $f(q),g(q)\in \mathbb{Z}[q]$, we say that $f(q)|g(q)$ if and only if $g(q)/f(q)\in \mathbb{Z}[q]$. For any polynomial $f(z)$, we denote by $\mathrm{ord}(f,z_0)$ the order of vanishing of $f(z)$ at the point $z_0$.

\begin{lemma}\label{lem:QMm:odd}
For any positive integer $m$, $1+q^m$ only has simple roots. For $m,n \geq 1$, $1+q^m|(1+q^n)$ if and only if $n/m$ is an odd integer.
\end{lemma}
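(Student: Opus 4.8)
The plan is to treat the two assertions separately, in each case translating statements about these binomials into statements about their roots, all of which are roots of unity.

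For the simple-roots claim, I would first observe that the roots of $1+q^m$ are exactly the solutions of $q^m=-1$, namely the $m$ distinct numbers $e^{i\pi(2k+1)/m}$ for $0\le k\le m-1$, all of which are nonzero. Differentiating gives $\frac{d}{dq}(1+q^m)=mq^{m-1}$, which vanishes only at $q=0$. Hence $1+q^m$ and its derivative share no common root, so every root of $1+q^m$ is simple; equivalently $\mathrm{ord}(1+q^m,z_0)=1$ at each root $z_0$.

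For the divisibility criterion the easy direction is ($\Leftarrow$): if $n/m$ is an odd integer, I would set $x=q^m$ and invoke the elementary factorization $1+x^{2j+1}=(1+x)\sum_{\ell=0}^{2j}(-x)^{\ell}$ with $2j+1=n/m$, which exhibits $1+q^m$ as a factor of $1+q^n$ in $\mathbb{Z}[q]$. For the converse ($\Rightarrow$), the key point is that because $1+q^m$ has only simple roots, $1+q^m\mid 1+q^n$ holds exactly when every root of $1+q^m$ is a root of $1+q^n$. I would test this at the single root $\zeta_{2m}=e^{i\pi/m}$, which satisfies $\zeta_{2m}^m=-1$, hence $1+\zeta_{2m}^m=0$. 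Divisibility then forces $1+\zeta_{2m}^n=0$, i.e.\ $\zeta_{2m}^n=-1=\zeta_{2m}^m$, which is equivalent to $\zeta_{2m}^{n-m}=1$, i.e.\ $2m\mid(n-m)$. Writing $n-m=2mk$ gives $n/m=2k+1$, an odd integer, as required.

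The computations are all routine; the only place needing a little care is the converse, where one must check that the single numerical condition $\zeta_{2m}^n=-1$ already forces $n/m$ to be an integer, and an odd one, rather than merely constraining $n$ more weakly. Passing through the congruence $n\equiv m\pmod{2m}$ makes this transparent and is the cleanest route to the ``odd integer'' conclusion.
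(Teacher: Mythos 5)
Your proof is correct, and its core coincides with the paper's: both arguments live among the $2m$-th roots of unity, and your converse direction (divisibility forces $n/m$ odd) is exactly the paper's --- test at the single root $\zeta_{2m}$ and extract the congruence $n\equiv m\pmod{2m}$. The one genuine difference is in the direction ``$n/m$ odd $\Rightarrow$ divisibility'': the paper's main proof shows that every root $\zeta_{2m}^{2r+1}$ of $1+q^m$ satisfies $\zeta_{2m}^{(2r+1)n}=-1$, hence is a root of $1+q^n$, and then concludes divisibility from the fact that $1+q^m$ has only simple roots; you instead invoke the identity $1+x^{2j+1}=(1+x)\sum_{\ell=0}^{2j}(-x)^{\ell}$ with $x=q^m$, which the paper relegates to a remark after the proof. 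Your route is slightly more self-contained for that direction: it produces the cofactor explicitly in $\mathbb{Z}[q]$, so you never need the simple-roots fact (nor monicity/Gauss-type considerations to pass from root containment in $\mathbb{C}[q]$ back to divisibility in $\mathbb{Z}[q]$), whereas the paper's version keeps the whole lemma uniformly phrased in terms of roots, which is the language it reuses later (orders of vanishing of $G_n$ and of $(q;q)_n$). Your justification of simplicity of roots via $\gcd\bigl(1+q^m,\,mq^{m-1}\bigr)=1$ is also a small variant of the paper's explicit factorization $1+q^m=\prod_{0\le r<m}(q-\zeta_{2m}^{2r+1})$; both are fine.
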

\begin{proof}
Note that
\begin{align}\label{simple-id}
1+q^m=\prod\limits_{0\leq r <m}(q-\zeta_{2m}^{2r+1}).
\end{align}
It is obvious that all the roots of $1+q^m$ are simple roots. It is also clear that $\zeta_{2m}$ is a root of $1+q^n$ if and only if $e^{\pi i n/m}=-1$, i.e., $n/m$ is an odd integer. Therefore, $(1+q^m)|(1+q^n)$ only if $n/m$ is an odd integer. Conversely, when $n/m$ is an odd integer, for any $0\leq r<m$, we have $\zeta_{2m}^{(2r+1)n}=-1$, which implies that any root of $1+q^m$ is also a root of $1+q^n$. Therefore, when $n/m$ is odd, we have $(1+q^m)|(1+q^n)$.
\end{proof}
\begin{remark}
When $n/m$ is odd, the fact $(1+q^m)|(1+q^n)$ can also be seen from the identity $(1+q^n)=(1+q^m)(\sum_{j=0}^{n/m-1}(-1)^jq^{mj})$.
\end{remark}

%
%

We are ready for the proof of Theorem~\ref{equi:Gn}.
\begin{proof}[{\bf Proof of Theorem~\ref{equi:Gn}}]
Recall that
\[
G_n=\prod_{k\ge 1}\prod_{i=1}^{\lfloor n/2^k \rfloor }(1+q^i)
=\prod_{i=1}^{\floor{n/2}}(1+q^i)^{r_i},
\]
where
\[
r_i=\abs{\{k\in\mathbb{Z}^+\colon \lfloor n/2^k \rfloor \ge i\}}.
\]

We now evaluate the orders of $G_n$ at its roots. Observe that all roots of $G_n$ are of the form $\zeta_{2m}^{2r+1}$ where $1\leq m \leq n$, $0\leq r <m$ and $(2r+1,m)=1$. For $0\leq r <m$ and $(2r+1,m)=1$, since $\zeta_{2m}^{2r+1}$ is a root of $1+q^i$ if and only if $i/m$ is an odd integer, we deduce that
\begin{align*}
\mathrm{ord}(G_n, {\zeta_{2m}^{2r+1}})&=\sum_{\begin{smallmatrix} lm\leq \lfloor n/2\rfloor \\ l ~~\text{odd} \end{smallmatrix}} r_{lm}
=\sum_{ l ~~\text{odd} } \abs{\{k\geq 1: \lfloor \frac{n}{2^k}\rfloor \geq lm \}}\\
&=\abs{\{s=2^kl: k\geq 1, \text{$l$ odd}, s\leq n/m\}} \\
&=\abs{\{s: \text{$s$ even},s \leq n/m\}}=\left\lfloor \frac{n}{2m}\right\rfloor.
\end{align*}
Therefore, we have
\begin{align}\label{G-prod}
G_n=\prod\limits_{m=1}^n \prod\limits_{\begin{smallmatrix} 0\leq r<m \\ (2r+1,m)=1\end{smallmatrix}} (q-\zeta_{2m}^{2r+1})^{\lfloor n/2m\rfloor}=\prod\limits_{m=1}^n \Phi_{2m}(q)^{\lfloor n/2m\rfloor}.
\end{align}
Here $\Phi_k(x)$ is the cyclotomic polynomial defined by
$$\Phi_k(x)=\prod\limits_{\begin{smallmatrix} 1\leq r <k \\ (r,k)=1\end{smallmatrix}}(x-\zeta_{k}^r).$$
It is well known that $x^n-1=\prod_{d|n}\Phi_d(x)$. We deduce that
\begin{align}\label{1plus}
1+q^m=\frac{1-q^{2m}}{1-q^m}=\prod\limits_{d|m, 2d\nmid m} \Phi_{2d}(q).
\end{align}
Comparing \eqref{G-prod} with \eqref{1plus}, we see that $(1+q^m)^{\lfloor n/2m\rfloor}|G_n$ for any $1\leq m \leq n$.

If $G_n|f(q)$, then it is obvious that $(1+q^m)^{\lfloor n/2m\rfloor}|f(q)$.

Conversely, if $(1+q^m)^{\lfloor n/2m\rfloor}|f(q)$ holds for all $1\leq m \leq \lfloor n/2\rfloor$, then by \eqref{1plus} we have $\Phi_{2m}(q)^{\lfloor n/2m\rfloor}|f(q)$. From \eqref{G-prod} we know that $G_n|f(q)$.
\end{proof}
\begin{remark}
In his study of the divisibility property of
the $q$-tangent numbers, Foata~\cite{Foa81} introduced the polynomial
\[
Ev_k(q)=\prod_{j=0}^l(1+q^{2^jm}),
\]
for any integer $k$ written as $k=2^lm$ with $l\in\mathbb{N}$ and odd $m$.
It was shown \cite[Lemma 2.1]{Foa81} that
\begin{align}\label{Ev-prod}
Ev_k(q)=\prod\limits_{m|k}\Phi_{2m}(q).
\end{align}
Therefore, we have
\begin{align}
\prod_{k=1}^n Ev_k(q)=\prod_{k=1}^n \prod\limits_{m|k}\Phi_{2m}(q)=\prod_{m=1}^n \Phi_{2m}(q)^{\lfloor n/m\rfloor}.
\end{align}
Together with the easy fact $\lfloor \frac{2n}{2m}\rfloor =\lfloor \frac{2n+1}{2m}\rfloor$ and \eqref{G-prod}, we get the following relation:
\begin{align}\label{G-Ev}
G_{2n}=G_{2n+1}=\prod_{k=1}^n Ev_k(q).
\end{align}
\end{remark}

\begin{theorem}\label{div:altmaj}
For any $n,m\ge 1$,
\begin{equation*}
\widehat A_n(1,q)=\sum_{\pi\in\ms_n}q^{\altmaj(\pi)}
\in (1+q^m)^{\lfloor n/2m\rfloor}\mathbb{Z}[q].
\end{equation*}
\end{theorem}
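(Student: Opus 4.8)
The plan is to reduce the divisibility to a statement about orders of vanishing at roots of unity, and then to extract everything from Remmel's generating function \eqref{gen:rem}. Since $1+q^m$ is monic in $\mathbb{Z}[q]$ and, by Lemma~\ref{lem:QMm:odd}, has only simple roots, the division algorithm (Gauss's lemma) shows that $(1+q^m)^{\lfloor n/2m\rfloor}$ divides $\widehat{A}_n(1,q)$ in $\mathbb{Z}[q]$ if and only if every root $\omega$ of $1+q^m$ is a zero of $\widehat{A}_n(1,q)$ of multiplicity at least $\lfloor n/2m\rfloor$. So I would fix a root $\omega$ of $1+q^m$, that is $\omega^m=-1$ and $\omega^{2m}=1$, and aim to bound $\mathrm{ord}(\widehat{A}_n(1,\cdot),\omega)$ from below.

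First I would pass to a closed form at $t=1$. Writing $(t;q)_{n+1}=(1-t)(tq;q)_n$ and multiplying \eqref{gen:rem} by $(1-t)$ turns the left-hand side into $\sum_{n\ge0}\frac{z^n}{n!}\widehat{A}_n(t,q)/(tq;q)_n$, whose value at $t=1$ is $\sum_{n\ge0}\frac{z^n}{n!}\widehat{A}_n(1,q)/(q;q)_n$. On the right-hand side, for $|q|<1$ the partial products $\prod_{j=0}^k(\sec(zq^j)+\tan(zq^j))$ converge coefficientwise in $z$ as $k\to\infty$, so Abel's theorem yields $\lim_{t\to1}(1-t)\sum_{k\ge0}t^k\prod_{j=0}^k(\sec(zq^j)+\tan(zq^j))=\prod_{j\ge0}(\sec(zq^j)+\tan(zq^j))$. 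Hence, writing $F(u)=\sec u+\tan u$ and $G(z,q)=\prod_{j\ge0}F(zq^j)$, I obtain
\begin{equation}
\sum_{n\ge0}\frac{\widehat{A}_n(1,q)}{(q;q)_n}\frac{z^n}{n!}=G(z,q)\qquad(|q|<1),
\end{equation}
so that $C_n(q):=n![z^n]G(z,q)$, a priori defined for $|q|<1$, coincides with the rational function $\widehat{A}_n(1,q)/(q;q)_n$.

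The heart of the argument is that $G$ extends regularly across $\omega$ even though the infinite product diverges there. Group the factors of $G$ in blocks of length $2m$: with $Q=q^{2m}$ and $\Pi(w,q)=\prod_{r=0}^{2m-1}F(wq^r)$ one has $G(z,q)=\prod_{b\ge0}\Pi(zQ^b,q)$, whence, taking logarithms and summing geometric series, $\log G(z,q)=\sum_{\ell\ge1}\frac{\lambda_\ell(q)}{1-q^{2m\ell}}z^\ell$, where $\log\Pi(w,q)=\sum_{\ell\ge1}\lambda_\ell(q)w^\ell$ with each $\lambda_\ell$ analytic near $\omega$. The crucial computation is that at $q=\omega$ the block product telescopes: pairing $r$ with $r+m$ and using $\omega^{r+m}=-\omega^r$ together with $F(-u)F(u)=(\sec u-\tan u)(\sec u+\tan u)=1$, one finds $\Pi(w,\omega)=1$ identically in $w$, so $\lambda_\ell(\omega)=0$ for every $\ell$. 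Since $1-q^{2m\ell}$ has only a simple zero at $\omega$, each summand $\lambda_\ell(q)/(1-q^{2m\ell})$ is then regular at $\omega$; consequently every coefficient $[z^\ell]\log G$, and hence, by the exponential form of Fa\`a di Bruno's formula expressing $[z^n]G$ as a polynomial in the $[z^\ell]\log G$, also $C_n(q)$, is regular at $\omega$.

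Finally I would assemble the pieces. Because $(q;q)_n=\prod_{i=1}^n(1-q^i)$ vanishes at $\omega$ exactly on the multiples of $d:=\mathrm{ord}(\omega)$, its order there is $\lfloor n/d\rfloor$, and from $\widehat{A}_n(1,q)=(q;q)_n\,C_n(q)$ with $C_n$ regular at $\omega$ I get $\mathrm{ord}(\widehat{A}_n(1,\cdot),\omega)\ge\lfloor n/d\rfloor$. Since $\omega^{2m}=1$ forces $d\mid 2m$, hence $d\le 2m$ and $\lfloor n/d\rfloor\ge\lfloor n/2m\rfloor$, the required bound holds at every root $\omega$, which proves the theorem. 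I expect the main obstacle to be precisely the regularity claim for $G$ at $\omega$: one must legitimately continue a divergent infinite product past $|q|=1$, and the clean route is the block-logarithm identity above, whose success rests entirely on the telescoping $\Pi(w,\omega)=1$. The remaining points—justifying the Abel limit coefficientwise in $z$, and the passage from $\mathbb{C}[q]$- to $\mathbb{Z}[q]$-divisibility via Gauss's lemma—I would treat only briefly.
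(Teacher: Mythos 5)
Your proof is correct, and it shares the paper's skeleton: multiply \eqref{gen:rem} by $1-t$ and let $t\to1$ to get $\sum_{n\ge0}\frac{z^n}{n!}\,\widehat{A}_n(1,q)/(q;q)_n=\prod_{j\ge0}(\sec(zq^j)+\tan(zq^j))$; show that $\widehat{A}_n(1,q)/(q;q)_n$, viewed as a rational function of $q$, has no pole at any root of $1+q^m$ (logarithm plus Fa\`a di Bruno); and conclude from the fact that the order of $1+q^m$ in $(q;q)_n$ is $\lfloor n/2m\rfloor$ (the paper's Lemma~\ref{lem-order}, which you rederive by counting multiples of $\mathrm{ord}(\omega)$ up to $n$). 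Where you genuinely differ is the central pole-cancellation mechanism. The paper computes the logarithmic derivative in closed form, $F'(z)/F(z)=\sum_{j\ge0}q^j\sec(zq^j)$, so the $n$-th Taylor coefficient of $\log F$ is essentially $\sec^{(n-1)}(0)/(1-q^n)$; this vanishes for even $n$ (secant is even), while for odd $n$ the denominator $1-q^n$ is coprime to $1+q^m$, and an induction through the recursion \eqref{Fn0} keeps all denominators prime to $1+q^m$. You never compute $\log F$ at all: you regroup the infinite product into blocks of $2m$ consecutive factors and use the reflection identity $(\sec u+\tan u)(\sec(-u)+\tan(-u))=1$ to get $\Pi(w,\omega)\equiv1$, so each log-coefficient $\lambda_\ell(q)$ vanishes at $\omega$ and absorbs the simple zero of $1-q^{2m\ell}$. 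The two mechanisms exploit the same underlying symmetry---$F(u)F(-u)=1$ is equivalent to $\log F$ being an odd function, i.e.\ to $\sec^{(2k)}(0)$ being the only nonzero derivatives---but they buy different things: the paper's route is more explicit (it locates all possible poles at odd-order roots of unity and runs entirely on coprimality in $\mathbb{Z}[q]$), whereas yours is more structural and applies verbatim to any product $\prod_{j\ge0}F(zq^j)$ with $F(0)=1$ and $F(u)F(-u)=1$, without knowing the Taylor expansion of $\log F$. In a final write-up the two points you deferred each deserve a sentence: the coefficientwise (in $z$) justification of the Abel limit and of interchanging $\log$ with the infinite product for $|q|<1$, and the passage from $\mathbb{C}[q]$- to $\mathbb{Z}[q]$-divisibility, which is uniqueness of division by a monic polynomial rather than Gauss's lemma proper; the paper is comparably terse on both.
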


Before we prove Theorem~\ref{div:altmaj}, we show that it is equivalent to Theorem~\ref{conj:altmaj}.

\begin{proof}[{\bf Proof of Theorem~\ref{conj:altmaj}}] In view of  Theorem~\ref{equi:Gn}, Theorem~\ref{div:altmaj} is equivalent to
$$
\widehat A_n(1,q)=\widehat{E}_n(q)G_n
$$
for some $\widehat{E}_n(q)\in\mathbb{Z}[q]$. Considering the complement of permutations,
one sees that the alternating major polynomial $\widehat A_n(1,q)$ is palindromic. Thus, the palindromicity of $\widehat{E}_n(q)$ follows from the following  basic properties.
\begin{enumerate}
\item
Let $f(q),g(q)\in\mathbb{Z}[q]$ such that $f(q)$ and $g(q)$ are both palindromic.
Then the product $f(q)g(q)$ is palindromic.
\item
Let $f(q),g(q),h(q)\in\mathbb{Z}[q]$ such that $f(q)=g(q)h(q)$.
Suppose that $g(q)$ is palindromic.
Then,
\[
\text{$f(q)$ is palindromic}
\iff
\text{$h(q)$ is palindromic}.
\]
\end{enumerate}
These two properties are easy consequences of the fact that a polynomial $f(q)\in\mathbb{Z}[q]$
of degree $n$ is palindromic
if and only if $f(q)=f(1/q)q^n$.
\end{proof}

It remains to prove Theorem~\ref{div:altmaj}. As usual, for integer $n\geq0$, let $(q;q)_n:=\prod_{i=1}^n(1-q^i)$.
\begin{lemma}\label{lem-order}
For any integer $n,m\geq 1$, the order of $1+q^m$ in $(q;q)_n$ is $\lfloor n/2m\rfloor$. That is,
$$(1+q^m)^{\lfloor n/2m\rfloor} |(q;q)_n, \quad \text{and} \quad (1+q^m)^{\lfloor n/2m\rfloor+1} \nmid (q;q)_n.$$
\end{lemma}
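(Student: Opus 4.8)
The plan is to reduce the statement to the factorization of $(q;q)_n$ into cyclotomic polynomials and then read off the order of $1+q^m$ directly from the already-established product \eqref{1plus}, using Lemma~\ref{lem:QMm:odd} to pass from divisibility by $1+q^m$ to divisibility by each of its cyclotomic factors.

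First I would record the elementary identity
$$
(q;q)_n=\prod_{i=1}^n(1-q^i)=(-1)^n\prod_{d=1}^n\Phi_d(q)^{\lfloor n/d\rfloor},
$$
which follows by substituting $1-q^i=(-1)\prod_{d\mid i}\Phi_d(q)$ and observing that, for each fixed $d$, the number of indices $i\in[n]$ divisible by $d$ is exactly $\lfloor n/d\rfloor$. Because distinct cyclotomic polynomials are pairwise coprime and each is squarefree, this display already encodes the exact multiplicity of every cyclotomic factor: the order of $\Phi_d(q)$ in $(q;q)_n$ is precisely $\lfloor n/d\rfloor$.

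Next I would invoke the decomposition \eqref{1plus}, namely $1+q^m=\prod_{d\mid m,\,2d\nmid m}\Phi_{2d}(q)$, together with Lemma~\ref{lem:QMm:odd}, which guarantees that $1+q^m$ is squarefree, i.e.\ a product of \emph{distinct} cyclotomic polynomials. Consequently $(1+q^m)^k\mid(q;q)_n$ if and only if $\Phi_{2d}(q)^k\mid(q;q)_n$ for every $d\mid m$ with $2d\nmid m$, which by the previous step is equivalent to $k\le\lfloor n/2d\rfloor$ for all such $d$. Hence the order of $1+q^m$ in $(q;q)_n$ equals $\min_{d\mid m,\,2d\nmid m}\lfloor n/2d\rfloor$.

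Finally I would carry out the minimization. Since $\lfloor n/2d\rfloor$ is nonincreasing in $d$, the minimum is attained at the largest admissible $d$; and as $d=m$ always satisfies $d\mid m$ and $2m\nmid m$ for $m\ge1$, while $m$ is the largest divisor of $m$, the minimum is $\lfloor n/2m\rfloor$, which is the claim. The only point requiring care — the closest thing to an obstacle — is the bookkeeping verifying that $m$ itself is an admissible index and dominates the set $\{d:d\mid m,\ 2d\nmid m\}$, so that no strictly smaller floor can arise; everything else is a direct consequence of the cyclotomic factorization of $(q;q)_n$ and the squarefreeness supplied by Lemma~\ref{lem:QMm:odd}.
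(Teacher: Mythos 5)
Your proof is correct, but it follows a genuinely different route from the paper's. The paper argues locally, at a single root: by \eqref{simple-id}, $\zeta_{2m}^{2r+1}$ is a simple root of $1-q^k$ exactly when $2m\mid(2r+1)k$, so the vanishing order of $(q;q)_n$ at the primitive root $\zeta_{2m}$ is $\#\{k\le n: 2m\mid k\}=\lfloor n/2m\rfloor$, and the paper then asserts that this vanishing order equals the order of $1+q^m$ in $(q;q)_n$. The content hidden in that last assertion is precisely that the primitive $2m$-th roots of unity minimize the vanishing order among all roots of $1+q^m$: a root $\zeta_{2m}^{2r+1}$ with $d=\gcd(2r+1,2m)$ gives the larger value $\lfloor nd/2m\rfloor\ge\lfloor n/2m\rfloor$. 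You instead argue globally, by unique factorization: the identity $(q;q)_n=(-1)^n\prod_{d=1}^{n}\Phi_d(q)^{\lfloor n/d\rfloor}$ combined with the paper's own \eqref{1plus} reduces the lemma to computing $\min\{\lfloor n/2d\rfloor\colon d\mid m,\ 2d\nmid m\}$, which you correctly identify as $\lfloor n/2m\rfloor$ since $d=m$ is admissible and maximal. Your version makes fully explicit the minimization step that the paper leaves as ``easy to see,'' and it runs parallel to the paper's own treatment of $G_n$ in the proof of Theorem~\ref{equi:Gn} (compare your multiplicity formula with \eqref{G-prod}), so it integrates naturally with the surrounding toolkit; the paper's version is shorter because it only ever needs the vanishing order at one well-chosen root. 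One cosmetic remark: your appeal to Lemma~\ref{lem:QMm:odd} for squarefreeness is redundant, since \eqref{1plus} already exhibits $1+q^m$ as a product of distinct, pairwise coprime, squarefree cyclotomic polynomials.
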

\begin{proof}
Recall \eqref{simple-id}. Given $0\leq r<m$, $\zeta_{2m}^{2r+1}$ is a root of $1-q^k$ if and only if $2m|(2r+1)k$, and in this case,  $\zeta_{2m}^{2r+1}$ is a simple root of $1-q^k$. In particular, $\zeta_{2m}$ is a root of $1-q^k$ if and only if $2m|k$. Therefore, $\mathrm{ord}((q;q)_n, \zeta_{2m})=\lfloor n/2m\rfloor$. It is easy to see that this order is the same as the order of $1+q^m$ in $(q;q)_n$.
\end{proof}

We are in position to prove Theorem~\ref{div:altmaj}.

\begin{proof}[{\bf Proof of Theorem~\ref{div:altmaj}}]
Multiplying both sides of~\eqref{gen:rem} by $1-t$ and letting $t$ tend to  $1$ gives
\begin{equation*}
\sum_{n=0}^{\infty}\frac{z^n}{n!}\frac{\widehat{A}_n(1,q)}{(q;q)_{n}}=\prod_{j=0}^{\infty}(\sec(zq^j)+\tan(zq^j))=:F(z).
\end{equation*}
%
%
By the definition of $F(z)$, it is easy to see that $F(0)=1$. Moreover, we have
\begin{align*}
\widehat{A}_n(1,q)=F^{(n)}(0)(q;q)_n.
\end{align*}
Taking logarithmic differentiation, and noting that
$$(\sec x)'=\sec x \tan x, \quad  (\tan x)'=\sec^2 x,$$
we get
\begin{align}\label{1st-d}
\frac{F'(z)}{F(z)}=\sum_{j=0}^\infty \sec(zq^j)q^j.
\end{align}
It follows that $F'(0)=\frac{1}{1-q}$. Recall that
\begin{align}
\sec x=\sum_{n=0}^\infty \frac{(-1)^nE_{2n}}{(2n)!}x^{2n},
\end{align}
where $E_k$ is the $k$-th Euler number. We have
\begin{align}\label{sec-eval}
\sec^{(n)}(0)=\left\{\begin{array}{ll} 0 &\text{$n$ is odd}, \\
(-1)^k E_{2k} & \text{$n=2k$ is even}.
\end{array}\right.
\end{align}

By \eqref{1st-d} we obtain
\begin{align}
\frac{d^{n-1}}{dx^{n-1}} \left(\frac{F'(x)}{F(x)} \right)\mid_{x=0}&=\sum_{j=0}^\infty \sec^{(n-1)}(0)q^{jn}=\frac{\sec^{(n-1)}(0)}{1-q^{n}}. \label{n-derivative}
\end{align}
By Fa\`{a} di Bruno's formula, for any $n$-times differentiable function $f(x)$,  we have
\begin{align}\label{FdB}
\frac{d^n}{dx^n} \ln f(x)=\sum_{\begin{smallmatrix}m_1+2m_2+\cdots +nm_n=n \\ m_i \geq 0\end{smallmatrix}} & \frac{n!}{m_1!m_2!\cdots m_n!}\frac{(-1)^{m_1+\cdots +m_n-1}(m_1+\cdots +m_n-1)!}{f(x)^{m_1+\cdots +m_n}}\nonumber \\
&\prod\limits_{1\leq j \leq n}\left(\frac{f^{(j)}(x)}{j!} \right)^{m_j}.
\end{align}
Setting $f(x)=F(x)$. By \eqref{n-derivative} and \eqref{FdB} we get
\begin{align}\label{Fn0}
F^{(n)}(0)=-\sum_{\begin{smallmatrix}m_1+2m_2+\cdots +(n-1)m_{n-1}=n \\ m_i \geq 0\end{smallmatrix}}& \frac{n!(m_1+\cdots +m_{n-1}-1)!}{m_1!m_2!\cdots m_{n-1}!}(-1)^{m_1+\cdots +m_{n-1}-1}\nonumber \\
&\prod\limits_{1\leq j \leq n-1}\left(\frac{F^{(j)}(0)}{j!} \right)^{m_j}+\frac{\sec^{(n-1)}(0)}{1-q^{n}}.
\end{align}

Given any integer $m\geq 1$, we claim that for any integer $r$, $F^{(r)}(0)\in P_m$, where
$$P_m=\left\{\frac{f(q)}{g(q)}: f(q), g(q)\in \mathbb{Z}[q], (f(q),g(q))=1 \text{ and } (1+q^m,g(q))=1\right\}.$$
This is obvious for $r=1$. Now suppose this claim holds for $r<n$. We are going to show that it also holds for $r=n$.

If $n=2k+1$ is odd, then note that $(1+q^m,1-q^{2k+1})=1$, from \eqref{Fn0} we see that $F^{(n)}(0)\in P_m$.

If $n$ is even, then by \eqref{sec-eval} we have $\sec^{(n-1)}(0)=0$. Thus $F^{(n)}(0)\in P_m$ follows directly from \eqref{Fn0} and induction hypothesis.

By mathematical induction, we see that the claim indeed holds for all $r$.

Now note that $\widehat{A}_n(1,q)=F^{(n)}(0)(q;q)_n$. By the claim and Lemma \ref{lem-order}, we see that the order of $1+q^m$ in $\widehat{A}_n(1,q)$ is at least $\lfloor n /2m\rfloor$.
\end{proof}

\subsection{Divisibility of $\widehat{A}_n(q^{j},q)$ via  the quadratic recursion}

The special $m=1$ case of Theorem~\ref{div:altmaj} asserts that $\widehat{A}_n(1,q)$ is divisible by $(1+q)^{\lfloor n/2\rfloor}$ over $\mathbb{Z}(q)$. This can be generalized to the following divisibility of $\widehat{A}_n(q^{j},q)$ using the quadratic recursion~\eqref{rec:2}.

\begin{theorem}
For $n\geq1$ and $j\geq0$,
\begin{itemize}
\item when $n$ is even, then $\widehat{A}_n(q^{j},q)$ is divisible by $(1+q)^{\lfloor n/2\rfloor}$ (resp.~$(1+q)^{\lfloor (n-1)/2\rfloor}$) if $j$ is even (resp.~odd);
\item when $n$ is odd, then   $\widehat{A}_n(q^{j},q)$ is divisible by $(1+q)^{\lfloor n/2\rfloor}$ for all $j\geq0$.
\end{itemize}
In particular, $\widehat{A}_n(1,q)$ is divisible by $(1+q)^{\lfloor n/2\rfloor}$ over $\mathbb{Z}(q)$.
\end{theorem}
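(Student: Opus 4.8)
The plan is to run a strong induction on $n$ directly off the quadratic recursion~\eqref{rec:2} after the substitution $t=q^{j}$, tracking at each step the exact power of $(1+q)$. Since $(1+q)$ is irreducible in $\mathbb{Q}[q]$ with the single simple root $q=-1$, I would phrase everything through the $(1+q)$-adic valuation $\nu(f):=\mathrm{ord}(f,-1)$, which satisfies $\nu(fg)=\nu(f)+\nu(g)$ and $\nu(f+g)\ge\min(\nu(f),\nu(g))$; because $(1+q)^{e}$ is monic and all the $\widehat{A}_m(q^{j'},q)$ lie in $\mathbb{Z}[q]$, the inequality $\nu(f)\ge e$ is the same as divisibility by $(1+q)^{e}$ in $\mathbb{Z}[q]$, and the factor $2$ on the left of~\eqref{rec:2} is harmless since it does not affect $\nu$. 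The whole theorem then collapses to the single uniform statement
\[
\nu\bigl(\widehat{A}_n(q^{j},q)\bigr)\ge d(n,j)\qquad\text{for all }n\ge1,\ j\ge0,
\]
where $d(n,j):=\lfloor n/2\rfloor$ except when $n$ is even and $j$ is odd, in which case $d(n,j):=\lfloor n/2\rfloor-1$.

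The only arithmetic input needed is that $\nu(1+q^{k})=1$ when $k$ is odd and $\nu(1+q^{k})=0$ when $k$ is even, which is the $m=1$ case of Lemma~\ref{lem:QMm:odd} (or just evaluation at $q=-1$). Substituting $t=q^{j}$ into~\eqref{rec:2} turns the right-hand side into
\[
(1+q^{j+1})\widehat{A}_n(q^{j+1},q),\quad (1+q^{j+n})\widehat{A}_n(q^{j},q),\quad \sum_{i=1}^{n-1}(1+q^{2j+2i+1})\binom{n}{i}\widehat{A}_i(q^{j},q)\,\widehat{A}_{n-i}(q^{j+i+1},q).
\]
The decisive feature is that every prefactor in the sum is $1+q^{\text{odd}}$, so it contributes a guaranteed $+1$ to the valuation, whereas the two boundary terms contribute their $+1$ exactly when $j+1$, respectively $j+n$, is odd. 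I would bound $\nu$ of each of the three pieces from below by combining these contributions with the inductive bounds $\nu(\widehat{A}_m(q^{j'},q))\ge d(m,j')$ for $m\le n$, and then conclude from $\nu(f+g)\ge\min$.

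Carrying this out is a finite parity bookkeeping. For the two boundary terms the $+1$ from the $(1+q^{\cdots})$ factor and the $-1$ correction in the definition of $d$ trade off against one another so that, in every parity case, the lower bound equals the target $d(n+1,j)$ exactly; this tradeoff is precisely what forces the slightly unusual definition of $d$. The main obstacle, and the step I would write out in full, is the sum: here I split on the parity of $i$, recalling that $i$ and $n-i$ share a parity when $n$ is even but have opposite parities when $n$ is odd, and in each resulting sub-case I verify $1+d(i,j)+d(n-i,j+i+1)\ge d(n+1,j)$. In all of these sub-cases the guaranteed $+1$ together with the two inductive floors recombine to hit $d(n+1,j)$ on the nose, so the bound is tight and the induction closes. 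The base case $n=1$ is immediate since $\widehat{A}_1(t,q)=1$ and $d(1,j)=0$, and the final ``in particular'' assertion is just the specialization $j=0$, for which $d(n,0)=\lfloor n/2\rfloor$ for every $n$.
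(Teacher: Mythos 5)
Your proposal is correct and follows the same route as the paper: the paper's proof also verifies the base cases, substitutes $t=q^{j}$ into the quadratic recursion~\eqref{rec:2}, and inducts on $n$, though it leaves the parity bookkeeping as ``routine to check.'' Your valuation framework $\nu=\mathrm{ord}(\cdot,-1)$ and the uniform exponent $d(n,j)$ simply make explicit the case analysis the paper omits, and your verification that every term hits the target $d(n+1,j)$ exactly is accurate.
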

\begin{proof}
The two statements are true for $n=1,2$, as $A_1(t,q)=1$ and $A_2(t,q)=1+tq$.
Setting $t=q^j$ in~\eqref{rec:2} gives
\begin{multline}\label{rec:altmaj}
2\widehat{A}_{n+1}(q^j,q)=(1+q^{j+1})\widehat{A}_n(q^{j+1},q)+(1+q^{n+j})\widehat{A}_n(q^j,q)\\
+\sum_{i=1}^{n-1}(1+q^{2(i+j)+1}){n\choose i}\widehat{A}_i(q^j,q)\widehat{A}_{n-i}(q^{i+j+1},q)
\end{multline}
for $n\geq1$.
Based on this quadratic recursion,  it is routine to check the two statements by induction on $n$.
\end{proof}

\subsection{A stronger combinatorialization conjecture}

We shall provide  a combinatorial way to show that $(1+q^m)^r$
divides a polynomial over $\mathbb{Z}[q]$ and propose a combinatorialization conjecture which is stronger than Theorem~\ref{div:altmaj}.

\begin{lemma}[Reduction lemma]\label{lem:reduction}
Fix $m\in\mathbb{Z}^+$.
Let $f,g\colon X\to \mathbb{Z}$ be functions defined on a finite set $X$.
For any $k\in\mathbb{Z}$, define a function $W_k\colon \mathbb{Z}_{2m}\to 2^X$ by
\[
W_k(l)=\{x\in X\colon kf(x)\equiv l\!\!\!\!\pmod{2m}\}.
\]
Then the statement
\[
\sum_{x\in W_k(l)}g(x)
=\sum_{x\in W_k(l+m)}g(x)
\qquad\text{for any $l\in\mathbb{Z}_m$}
\]
holds for all odd integer $k$ if it holds for $k=1$.
\end{lemma}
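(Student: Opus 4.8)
The plan is to prove this by analyzing how the congruence classes of $kf(x)$ modulo $2m$ relate to those of $f(x)$ when $k$ is odd. The key observation is that the statement for a given odd $k$ is really a statement about the fibers $W_k(l)$, so I would first understand precisely how $W_k$ is obtained from $W_1$. Writing $k$ as an odd integer, multiplication by $k$ induces a map on $\mathbb{Z}_{2m}$, and since $\gcd(k,2m)$ divides $k$ which is odd, we have $\gcd(k,2m)=\gcd(k,m)$; the crucial point is that this map sends the residue $l$ to $kl$ and, because $k$ is odd, it interacts compatibly with the shift $l\mapsto l+m$. Concretely, $k(l+m)=kl+km\equiv kl+m\pmod{2m}$ precisely because $km\equiv m\pmod{2m}$ when $k$ is odd (as $km-m=(k-1)m$ and $k-1$ is even, so $2m\mid(k-1)m$). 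This is the arithmetic heart of the argument.

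First I would fix an odd integer $k$ and rewrite the fibers in terms of the $k=1$ fibers. For $x\in X$, the condition $kf(x)\equiv l\pmod{2m}$ says $f(x)$ lies in the preimage of $l$ under multiplication by $k$ on $\mathbb{Z}_{2m}$. So $W_k(l)=\bigcup_{j:\,kj\equiv l}W_1(j)$, a disjoint union over those residues $j\in\mathbb{Z}_{2m}$ with $kj\equiv l\pmod{2m}$. Hence
\[
\sum_{x\in W_k(l)}g(x)=\sum_{\substack{j\in\mathbb{Z}_{2m}\\ kj\equiv l}}\sum_{x\in W_1(j)}g(x).
\]
I would introduce the abbreviation $G(j):=\sum_{x\in W_1(j)}g(x)$ for $j\in\mathbb{Z}_{2m}$, so that the $k=1$ hypothesis reads $G(j)=G(j+m)$ for all $j\in\mathbb{Z}_m$, i.e.\ $G$ is invariant under the shift by $m$.

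Next I would carry out the decisive step: comparing $\sum_{x\in W_k(l)}g(x)$ with $\sum_{x\in W_k(l+m)}g(x)$. Using the identity above for both $l$ and $l+m$, the claim reduces to showing that the index set $\{j:kj\equiv l\}$ and the index set $\{j:kj\equiv l+m\}$ give equal $G$-sums. By the arithmetic fact $k(j+m)\equiv kj+m\pmod{2m}$ (valid since $k$ is odd), the translation $j\mapsto j+m$ is a bijection from $\{j:kj\equiv l\}$ onto $\{j:kj\equiv l+m\}$. Under this bijection, the summand $G(j)$ is carried to $G(j+m)$, which equals $G(j)$ by the $k=1$ hypothesis. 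Therefore the two sums are term-by-term equal, giving the desired identity for $k$.

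The main obstacle, and the step deserving the most care, is the bookkeeping when $\gcd(k,2m)>1$: then multiplication by $k$ on $\mathbb{Z}_{2m}$ is not a bijection, so the fibers $\{j:kj\equiv l\}$ may be empty or have several elements, and one must verify that the translation-by-$m$ bijection is well defined and respects nonemptiness simultaneously on both sides. This is handled cleanly by the observation that $j\mapsto j+m$ is an involution-type shift commuting with multiplication by odd $k$ in the precise sense $k(j+m)=kj+m$ in $\mathbb{Z}_{2m}$, so it matches the solution set of $kj\equiv l$ bijectively with that of $kj\equiv l+m$ regardless of cardinalities; once this bijection is in hand, the shift-invariance $G(j)=G(j+m)$ supplied by the $k=1$ case finishes the proof.
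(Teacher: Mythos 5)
Your proof is correct, and it is technically cleaner than the paper's. Both arguments share the same skeleton: decompose $W_k(l)$ as a disjoint union of $k=1$ fibers $W_1(j)$ over the solution set $\{j\in\mathbb{Z}_{2m}\colon kj\equiv l\}$, match the fibers appearing in $W_k(l)$ with those in $W_k(l+m)$ via a shift by $m$, and invoke the $k=1$ hypothesis. Where you differ is in how that matching is produced. The paper solves the congruence $kf(x)\equiv l\pmod{2m}$ explicitly: it sets $d=\gcd(k,m)$, splits off the degenerate case $d\nmid l$ (both sides empty), passes to $k'=k/d$, $m'=m/d$, picks an odd inverse $h$ of $k'$ modulo $2m'$, parametrizes the solution set as $\{hl'+2m's\colon s\in\mathbb{Z}_d\}$, and then needs a reindexing $s\mapsto s+(d-h)/2$ to exhibit the shift by $m$. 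You bypass all of this number theory with the single observation that $km\equiv m\pmod{2m}$ for odd $k$, so translation $j\mapsto j+m$ commutes with multiplication by $k$ on $\mathbb{Z}_{2m}$ and hence restricts to a bijection between the two solution sets, regardless of whether they are empty or how many elements they have. This buys a shorter, more conceptual proof with no case analysis and no modular inverses; the paper's explicit parametrization buys nothing extra here, since the exact structure of the solution sets is never needed, only their behavior under the shift. One small point to make explicit when writing this up: the hypothesis gives $G(l)=G(l+m)$ for $l\in\mathbb{Z}_m$, and you use it for arbitrary $j\in\mathbb{Z}_{2m}$; this extension is immediate since $G$ is $2m$-periodic and the shift by $m$ is an involution on $\mathbb{Z}_{2m}$, but it deserves a half-sentence.
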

\begin{proof}
Let $k$ be an odd integer.
Let $d=\gcd(k,m)$ be the greatest common divisor of $k$ and~$m$.
Then $d$ is odd.
Denote $k'=k/d$ and $m'=m/d$.
Let $l\in\mathbb{Z}_m$.
If $d$ does not divide~$l$,
then $W_k(l)=W_k(l+m)=\emptyset$ and we have nothing to show.
Below we can suppose that~$d$ divides $l$.
Then $l'=l/d\in\mathbb{Z}$.

From definition, one infers that $\gcd(k',\,2m')=1$,
and there is an odd integer~$h$ such that $k'h\equiv 1\pmod{2m'}$.
Solving the equation $kf(x)\equiv l\pmod{2m}$, we find
\begin{align*}
W_k(l)
&=\{x\in X\colon f(x)\equiv hl'\!\!\!\!\pmod{2m'}\}
=\sqcup_{s\in\mathbb{Z}_d}W_1(hl'+2m's),
\end{align*}
where the symbol $\sqcup$ denotes disjoint union.
Replacing $l$ by $l+m$, we deduce that
\begin{align}
W_k(l+m)
&=\bigsqcup_{s\in\mathbb{Z}_d} W_1\brk1{h(l'+m')+2m's}\notag\\
&=\bigsqcup_{s\in\mathbb{Z}_d} W_1\brk2{h(l'+m')+2m'\brk1{s+(d-h)/2}}\label{pf1}\\
&=\bigsqcup_{s\in\mathbb{Z}_d} W_1\brk1{hl'+2m's+m},\notag
\end{align}
where Eq.~\eqref{pf1} holds since the integer $s+(d-h)/2$ runs over $\mathbb{Z}_d$
as $s$ does. Since the desired statement holds for $k=1$, we can derive that
\begin{align*}
\sum_{x\in W_k(l+m)}g(x)
=\sum_{s\in\mathbb{Z}_d}\sum_{x\in W_1(hl'+2m's+m)}g(x)
=\sum_{s\in\mathbb{Z}_d}\sum_{x\in W_1(hl'+2m's)}g(x)
=\sum_{x\in W_k(l)}g(x).
\end{align*}
This completes the proof.
\end{proof}

Below is a combinatorial way to show that $(1+q^m)^r$
divides an enumerative  polynomial over $\mathbb{Z}[q]$.

\begin{proposition}\label{cor:Qm}
Fix $m\in\mathbb{Z}^+$ and $r\in\mathbb{N}$.
Let $\st\colon X\to \mathbb{Z}$ be a function defined on a finite set~$X$.
Then $\sum_{x\in X}q^{\st(x)}\in (1+q^m)^r\mathbb{Z}[q]$
if
\[
\sum_{\st(x)\equiv l\!\! \!\!\pmod{2m}}\binom{\st(x)}{j}
=\sum_{\st(x)\equiv l+m\!\! \!\!\pmod{2m}}\binom{\st(x)}{j}.
\]
for any $l\in \mathbb{Z}_m$ and any $0\le j\le r-1$.
\end{proposition}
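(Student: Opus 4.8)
The plan is to show that $(1+q^m)^r$ divides $P(q):=\sum_{x\in X}q^{\st(x)}$ by verifying that $P(q)$ vanishes to order at least $r$ at each root of $1+q^m$. By Lemma~\ref{lem:QMm:odd}, the polynomial $1+q^m$ has only simple roots, namely the $m$ primitive $(2m)$-th roots of unity $\zeta_{2m}^{2k+1}$ with $k$ odd (equivalently $\gcd(2k+1,2m)=\gcd(2k+1,m)$ running over the appropriate residues); in particular all these roots are of the form $\zeta_{2m}^{k}$ for odd $k$. Since a product $\prod(q-\alpha)^r$ with distinct simple $\alpha$ divides $P(q)$ over $\mathbb{Z}[q]$ precisely when $\mathrm{ord}(P,\alpha)\ge r$ at each such root, it suffices to prove that $P(q)$ and its first $r-1$ derivatives all vanish at every primitive $(2m)$-th root of unity.

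First I would reduce the vanishing of derivatives to vanishing of sums over residue classes. Writing $P(q)=\sum_{x\in X}q^{\st(x)}$, the $j$-th derivative is a $\mathbb{Z}$-linear combination of the falling-factorial evaluations $\sum_{x}\binom{\st(x)}{j}\,\mathrm{(stuff)}\,q^{\st(x)-j}$; evaluating at a root $\zeta_{2m}^{k}$ (with $k$ odd) and grouping $x$ according to the residue $\st(x)\bmod 2m$, the contribution of the residue class $l$ carries the common factor $\zeta_{2m}^{kl}$. Because $\zeta_{2m}^{k\cdot m}=\zeta_2^{k}=-1$ for odd $k$, the classes $l$ and $l+m$ enter with opposite signs. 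Hence each derivative evaluation reduces to the signed pairing
\[
\sum_{\st(x)\equiv l\!\!\!\!\pmod{2m}}\binom{\st(x)}{j}
-\sum_{\st(x)\equiv l+m\!\!\!\!\pmod{2m}}\binom{\st(x)}{j},
\]
and the hypothesis says exactly that this vanishes for every $l\in\mathbb{Z}_m$ and every $0\le j\le r-1$. This handles the root $\zeta_{2m}$ itself (the case $k=1$).

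The main obstacle is that the hypothesis is stated only for the residue condition with coefficient $k=1$, whereas the other simple roots $\zeta_{2m}^{k}$ with $k$ odd require the analogous balancing of the classes $kf(x)\equiv l$ against $kf(x)\equiv l+m$. This is precisely the gap that the Reduction Lemma (Lemma~\ref{lem:reduction}) is designed to close: applying it with $f=\st$ and $g(x)=\binom{\st(x)}{j}$, the balancing statement for general odd $k$ follows from its $k=1$ instance. Thus I would invoke Lemma~\ref{lem:reduction} to upgrade the hypothesis from $k=1$ to all odd $k$, conclude that $\mathrm{ord}(P,\zeta_{2m}^{k})\ge r$ at every primitive $(2m)$-th root of unity, and finally note that since $1+q^m$ has simple roots, these order bounds together give $(1+q^m)^r\mid P(q)$ over $\mathbb{Z}[q]$. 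The only care needed is the bookkeeping that ties the $j$-th derivative at a root to the falling-factorial sums and confirms that the sign pattern is exactly the $l\leftrightarrow l+m$ alternation; this is routine once the factor $\zeta_{2m}^{km}=-1$ is isolated.
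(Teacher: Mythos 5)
Your proposal is correct and takes essentially the same route as the paper's own proof: divisibility by the simple-rooted $(1+q^m)^r$ is reduced to the vanishing of the $j$-th derivatives ($0\le j\le r-1$) at the roots $\zeta_{2m}^k$ with $k$ odd, each evaluation is organized into residue classes modulo $2m$ paired through $\zeta_{2m}^{km}=-1$, and Lemma~\ref{lem:reduction} is invoked with $f=\st$ and $g(x)=\binom{\st(x)}{j}$, exactly as the paper does. One remark: because you group by $\st(x)\bmod 2m$ (rather than by $k\,\st(x)\bmod 2m$ as the paper does), the class $l$ carries coefficient $\zeta_{2m}^{kl}$ and the class $l+m$ carries $-\zeta_{2m}^{kl}$ for \emph{every} odd $k$, so your second paragraph already handles all roots simultaneously and the Reduction Lemma step in your third paragraph is redundant in your formulation --- the ``obstacle'' you describe only arises under the paper's grouping.
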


\begin{proof}
The zero set of the polynomial $1+q^m$ is
\[
\brk[c]1{e^{\pm k\pi i/m}\colon\text{$1\le k\le m$ and $k$ is odd}}.
\]
For $j\ge 0$,
we denote the $j$th derivative of the polynomial $\sum_{x\in X}q^{\st(x)}$ by $D_j(q)$. Then
\[
D_j(q)
=\frac{j!}{q^j}\sum_{x\in X}\binom{\st(x)}{j}q^{\st(x)}.
\]
Since the number $D_j\brk1{e^{-k\pi i/m}}$
is the conjugate of $D_j\brk1{e^{k\pi i/m}}$,
we find
\begin{equation}\label{div:st}
\sum_{x\in X}q^{\st(x)}\in (1+q^m)^r\mathbb{Z}[q]
\end{equation}
if and only if
$D_j\brk1{e^{k\pi i/m}}=0$
for any odd $1\le k\le m$ and any $0\le j\le r-1$. Set
$$
c_j(x)=j!{\st(x)\choose j}.
$$
Since
\[
D_j\brk1{e^{k\pi i/m}}
=\sum_{x\in X}c_j(x)\exp\brk3{\frac{k(\st(x)-j)\pi i}{m}},
\]
the divisibility in~\eqref{div:st} holds if
\[
\sum_{k(\st(x)-j)\equiv l\!\! \!\!\pmod{2m}}c_j(x)
=\sum_{k(\st(x)-j)\equiv l+m\!\! \!\!\pmod{2m}}c_j(x)
\quad\text{for all $l\in\mathbb{Z}_m$},
\]
that is,
\[
\sum_{k\times\st(x)\equiv l\!\! \!\!\pmod{2m}}c_j(x)
=\sum_{k\times\st(x)\equiv l+m\!\! \!\!\pmod{2m}}c_j(x)
\quad\text{for all $l\in\mathbb{Z}_m$}.
\]
By Lemma~\ref{lem:reduction}, this is equivalent to the premise.
\end{proof}

In view of Proposition~\ref{cor:Qm},  we propose the following conjecture which is stronger than Theorem~\ref{div:altmaj}.
\begin{conjecture}\label{conj:altmaj2}
Let $n\ge 1$ and $1\le m\le \floor{n/2}$.
For any $l\in \mathbb{Z}_m$ and any $0\le j\le \floor{n/2m}-1$,
\[
\sum_{\pi\in\ms_n\atop\altmaj(\pi)\equiv l\!\! \!\!\pmod{2m}}\binom{\altmaj(\pi)}{j}
=\sum_{\pi\in\ms_n\atop\altmaj(\pi)\equiv l+m\!\! \!\!\pmod{2m}}\binom{\altmaj(\pi)}{j}.
\]
Consequently,
$$
\sum_{\pi\in\ms_n}q^{\altmaj(\pi)}
\in (1+q^m)^{\lfloor n/2m\rfloor}\mathbb{Z}[q].
$$
\end{conjecture}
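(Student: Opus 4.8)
The plan is to separate the two assertions of the conjecture. The concluding divisibility $\sum_{\pi\in\ms_n}q^{\altmaj(\pi)}\in(1+q^m)^{\lfloor n/2m\rfloor}\mathbb{Z}[q]$ is \emph{not} where the work lies: it is already secured by Theorem~\ref{div:altmaj}, and it also follows formally from the displayed identities by invoking Proposition~\ref{cor:Qm} with $X=\ms_n$, $\st=\altmaj$, and $r=\lfloor n/2m\rfloor$. Thus the entire content of the conjecture is the combinatorial identity
\[
\sum_{\pi\in\ms_n,\,\altmaj(\pi)\equiv l}\binom{\altmaj(\pi)}{j}=\sum_{\pi\in\ms_n,\,\altmaj(\pi)\equiv l+m}\binom{\altmaj(\pi)}{j}\qquad(0\le j\le r-1),
\]
with congruences taken modulo $2m$, and what is genuinely sought is a \emph{bijective} explanation of it. First I would record the generating-function reformulation that organizes the target: writing $f_l(x)=\sum_{\altmaj(\pi)\equiv l}x^{\altmaj(\pi)}$, the whole family of identities over $0\le j\le r-1$ is equivalent to the single divisibility $(x-1)^{r}\mid f_l(x)-f_{l+m}(x)$, since $\tfrac1{j!}(f_l-f_{l+m})^{(j)}(1)$ is precisely the difference of the two sides. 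This pinpoints the combinatorial symmetry that must be exhibited.

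The main approach I would pursue is to construct an explicit $\mathbb{Z}_{2m}$-equivariant structure on $\ms_n$ realizing the symmetry $l\leftrightarrow l+m$ of $\altmaj$ modulo $2m$. The clean entry point is the case $j=0$, which says that $\altmaj$ is equidistributed on the residue classes $l$ and $l+m$; here I would look for an involution $\Theta_m\colon\ms_n\to\ms_n$ with $\altmaj(\Theta_m\pi)\equiv\altmaj(\pi)+m\pmod{2m}$, assembled from a single local alternating-descent move that is available because $2m\le n$. The complement involution $\mathcal{C}$, which satisfies $\altmaj(\mathcal{C}\pi)=\binom n2-\altmaj(\pi)$ and already proves the palindromicity of $\widehat{A}_n(1,q)$ (and the $j=0$ statement for the particular $l$ with $\binom n2\equiv 2l+m\pmod{2m}$), is a natural prototype to deform into such a $\Theta_m$. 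An alternative, more systematic route is induction through the quadratic recursion~\eqref{rec:2}: its bijection decomposes a permutation with $n+1$ (or $1$) in a fixed slot into a pair $(\pi',\pi'')$ with $\altmaj(\pi)=\altmaj(\pi')+\altmaj(\pi'')+(i+1)\altdes(\pi'')+(2i+1)\chi(i\text{ even})$, which one would try to use to propagate the residue-class balance from $\ms_i$ and $\ms_{n-i}$ up to $\ms_{n+1}$.

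The hard part, and the reason the statement remains conjectural, is that neither mechanism survives the passage from $j=0$ to $j\ge 1$. A value-shifting bijection such as $\Theta_m$ matches the \emph{cardinalities} of the two residue classes but not the weighted sums $\sum\binom{\altmaj}{j}$, because $\binom{a+m}{j}\ne\binom{a}{j}$ in general; matching \emph{all} binomial moments $j=0,\dots,r-1$ simultaneously forces the two multisets $\{\altmaj(\pi):\altmaj(\pi)\equiv l\}$ and $\{\altmaj(\pi):\altmaj(\pi)\equiv l+m\}$ to agree to order $r$ at $x=1$ (the $(x-1)^r$-divisibility above), a far more rigid condition than any single pairing can encode. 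Likewise, in the inductive route the cross term $(i+1)\altdes(\pi'')$ scrambles residues modulo $2m$, and binomial coefficients of a sum do not factor through the product $\widehat{A}_i\widehat{A}_{n-i}$, so the refined distribution does not obviously propagate. I expect the resolution to demand a genuinely new combinatorial device, most plausibly a cyclic action on $\ms_n$ exhibiting $\widehat{A}_n(1,q)$ as a cyclic-sieving polynomial for the $2m$-th roots of unity cut out by $1+q^m$, rather than a direct involution; locating such an action is the central obstacle.
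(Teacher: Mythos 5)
Your structural reading agrees with the paper's own treatment: this statement is a conjecture there as well, and the paper establishes exactly the two things you isolate as accessible --- the formal implication (the displayed identities yield the divisibility via Proposition~\ref{cor:Qm}, which is why the conjecture is ``stronger than Theorem~\ref{div:altmaj}'') and the case $j=0$; the cases $1\le j\le\floor{n/2m}-1$ are left open after verification for $n\le 20$. Your reformulation of the full family of identities as $(x-1)^{\floor{n/2m}}$ dividing $f_l(x)-f_{l+m}(x)$ is correct, and your diagnosis of why a value-shifting bijection or induction through the quadratic recursion~\eqref{rec:2} cannot reach $j\ge 1$ is in substance the authors' own reason for stopping at $j=0$; your cyclic-sieving suggestion goes beyond anything attempted in the paper.

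The concrete gap is that you stop at ``I would look for an involution $\Theta_m$'' for the $j=0$ case without producing one; this is the only substantive part of the conjecture the paper actually proves, and its construction is simpler than the deformation of the complement $\mathcal{C}$ you have in mind --- it is positional, not value-based. Given $1\le m\le\floor{n/2}$, reverse the first $2m$ letters: set $\pi'_i=\pi_{2m+1-i}$ for $1\le i\le 2m$ and $\pi'_i=\pi_i$ for $i>2m$ (legitimate since $2m\le n$). This is an involution on $\ms_n$, and for $1\le i\le 2m-1$ the indices $i$ and $2m-i$ have the same parity while the pair $(\pi_i,\pi_{i+1})$ reappears reversed at position $2m-i$ of $\pi'$, so that $i\in\Altdes(\pi)\iff 2m-i\notin\Altdes(\pi')$. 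Writing $S=\sum_{i\in\Altdes(\pi),\,i<2m}i$ and $S'$ for the corresponding sum for $\pi'$, this gives
\[
S'=\sum_{\substack{1\le i\le 2m-1\\ i\notin\Altdes(\pi)}}(2m-i)\equiv S-m(2m-1)\equiv S+m\pmod{2m},
\]
while the boundary index $2m$ contributes $2m\equiv 0$ whether or not it lies in $\Altdes(\pi)$ or $\Altdes(\pi')$, and every index $i>2m$ keeps its status. Hence $\altmaj(\pi')\equiv\altmaj(\pi)+m\pmod{2m}$, which gives a bijection between $\{\pi\in\ms_n:\altmaj(\pi)\equiv l\pmod{2m}\}$ and $\{\pi\in\ms_n:\altmaj(\pi)\equiv l+m\pmod{2m}\}$ and settles $j=0$ (hence that $1+q^m$ divides $\widehat{A}_n(1,q)$). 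With that map supplied, your proposal coincides with the paper's treatment of this conjecture; without it, the proposal establishes nothing beyond what Theorem~\ref{div:altmaj} and Proposition~\ref{cor:Qm} already give.
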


This conjecture has been verified for $n\leq 20$. At the beginning, it was our attempt to prove Theorem~\ref{div:altmaj} by verifying    Conjecture~\ref{conj:altmaj2}. Unfortunately, we  can only prove  Conjecture~\ref{conj:altmaj2} for the special  $j=0$ case.

\begin{theorem}
Conjecture~\ref{conj:altmaj2} is true for $j=0$. Thus, $(1+q^m)$ divides $\sum_{\pi\in\ms_n}q^{\altmaj(\pi)}$ for $1\le m\le \floor{n/2}$.
\end{theorem}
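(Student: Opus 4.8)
The plan is to combine the criterion of Proposition~\ref{cor:Qm} with the divisibility already proved in Theorem~\ref{div:altmaj}. For $j=0$ we have $\binom{\altmaj(\pi)}{0}=1$, so, writing
\[
c_l=\#\{\pi\in\ms_n\colon \altmaj(\pi)\equiv l\pmod{2m}\},
\]
the $j=0$ case of Conjecture~\ref{conj:altmaj2} is the system of equalities $c_l=c_{l+m}$ for all $l\in\mathbb{Z}_m$. The first thing I would record is that this system is \emph{equivalent} to the single divisibility $(1+q^m)\mid\widehat{A}_n(1,q)$. Indeed, put $P(q)=\widehat{A}_n(1,q)=\sum_{\pi\in\ms_n}q^{\altmaj(\pi)}$ and evaluate at a root $\eta$ of $1+q^m$, so that $\eta^m=-1$ and $\eta^{2m}=1$. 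Grouping permutations by $\altmaj(\pi)\bmod 2m$ and using $\eta^{l+m}=-\eta^l$ gives
\[
P(\eta)=\sum_{l=0}^{m-1}(c_l-c_{l+m})\,\eta^l.
\]
By Lemma~\ref{lem:QMm:odd} the polynomial $1+q^m$ is squarefree and has exactly the $m$ distinct roots $\zeta_{2m}^{2r+1}$ with $0\le r<m$; hence $(1+q^m)\mid P$ if and only if the polynomial $\sum_{l=0}^{m-1}(c_l-c_{l+m})x^l$, of degree less than $m$, vanishes at these $m$ points, that is, if and only if $c_l=c_{l+m}$ for every $l$. This is precisely the computation in the proof of Proposition~\ref{cor:Qm} specialized to $r=1$, now read as an equivalence.

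The second step is to supply that divisibility. Since $1\le m\le\floor{n/2}$ forces $2m\le n$, we have $\floor{n/2m}\ge1$, and Theorem~\ref{div:altmaj} gives $(1+q^m)^{\floor{n/2m}}\mid\widehat{A}_n(1,q)$, in particular $(1+q^m)\mid\widehat{A}_n(1,q)$. Together with the equivalence of the first step this settles the $j=0$ case, and the displayed divisibility in the statement is then immediate (it is also what Proposition~\ref{cor:Qm} returns with $r=1$). If one prefers to avoid quoting Theorem~\ref{div:altmaj} as a black box, the single factor can be extracted directly from the ingredients of its proof: there $\widehat{A}_n(1,q)=F^{(n)}(0)\,(q;q)_n$ with $F^{(n)}(0)\in P_m$, so $F^{(n)}(0)$ has no pole at any root of $1+q^m$, while Lemma~\ref{lem-order} shows that $1+q^m$ divides $(q;q)_n$ (to order $\floor{n/2m}\ge1$); hence $P(\eta)=0$ at every such root and $(1+q^m)\mid\widehat{A}_n(1,q)$.

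The genuinely combinatorial content, and the place where I expect the real difficulty, is to realize $c_l=c_{l+m}$ by an explicit statistic-shifting bijection rather than through the analytic input. For $m=1$ this is easy and instructive: transposing the first two entries $\pi_1\leftrightarrow\pi_2$ is a fixed-point-free involution of $\ms_n$ that always toggles whether the odd index $1$ is an alternating descent, while it can change the contribution of the even index $2$ only by $0$ or $\pm2$ and leaves all indices $\ge3$ untouched; thus it changes $\altmaj$ by an odd number and interchanges the classes $\altmaj$ even and $\altmaj$ odd, proving $c_0=c_1$. The obstruction to a uniform argument is that toggling the index $m$ by an adjacent transposition disturbs the neighbouring indices $m\pm1$, whose contributions are not multiples of $2m$, so the naive involution no longer shifts $\altmaj$ by exactly $m$ modulo $2m$. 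Producing a single involution on $\ms_n$ that shifts $\altmaj$ by $m$ modulo $2m$ for every admissible $m$ — and, a fortiori, one compatible with the binomial weights $\binom{\altmaj(\pi)}{j}$ required for the cases $j\ge1$ — is the main obstacle, and is the reason only $j=0$ is treated here.
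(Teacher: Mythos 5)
Your proof is correct, but it takes a genuinely different route from the paper's. You observe that for $r=1$ the criterion of Proposition~\ref{cor:Qm} is actually an equivalence (since $1+q^m$ is squarefree by Lemma~\ref{lem:QMm:odd}, a polynomial is divisible by it if and only if it vanishes at its $m$ distinct roots, and $P(\eta)=\sum_{l=0}^{m-1}(c_l-c_{l+m})\eta^l$ there), and you then import the divisibility $(1+q^m)\mid\widehat{A}_n(1,q)$ from Theorem~\ref{div:altmaj}; this is legitimate and non-circular, since that theorem is proved earlier and independently by analytic means (Fa\`a di Bruno's formula). The paper instead proves the counting identity $c_l=c_{l+m}$ directly, by an explicit involution that never invokes Theorem~\ref{div:altmaj}: given $\pi\in\ms_n$, reverse its first $2m$ letters, i.e.\ set $\pi'_i=\pi_{2m+1-i}$ for $1\le i\le 2m$ and $\pi'_i=\pi_i$ otherwise. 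Since $i$ and $2m-i$ have the same parity and the compared pair of letters is the same but in opposite order, one has $i\in\Altdes(\pi')$ iff $2m-i\notin\Altdes(\pi)$ for $1\le i\le 2m-1$; writing $k$ and $s$ for the number and the sum of the alternating descents of $\pi$ among the indices $1,\dots,2m-1$, these indices contribute $m(2m-1)-2mk+s\equiv m+s\pmod{2m}$ to $\altmaj(\pi')$, while the boundary index $2m$ contributes $0$ or $2m\equiv 0$, and all indices beyond $2m$ are untouched. Hence $\altmaj(\pi')\equiv\altmaj(\pi)+m\pmod{2m}$, which yields $c_l=c_{l+m}$ at once. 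Note that this is exactly the generalization of your $m=1$ swap $\pi_1\leftrightarrow\pi_2$: the way around the obstruction you identified (an adjacent transposition at position $m$ disturbs the neighbouring indices) is to reverse the whole prefix of length $2m$, so that the interior indices are exchanged complementarily among themselves and the only disturbed boundary index is $2m$ itself, whose contribution vanishes modulo $2m$. As for what each approach buys: yours is shorter, but it inherits the analytic input behind Theorem~\ref{div:altmaj} and so carries no combinatorial information, whereas the paper's argument is self-contained and purely bijective --- which matters here, because the authors' motivation for this theorem is precisely the hope that such a bijection might extend to the binomially weighted sums with $j\ge 1$ in Conjecture~\ref{conj:altmaj2}.
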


\begin{proof}
Let
$$
\bs_{n,l}^{(m)}:=\{\pi\in\bs_n:\altmaj(\pi)\equiv l\!\! \!\!\pmod{2m}\}.
$$
Then, the result is equivalent to
$$
|\bs_{n,l}^{(m)}|=|\bs_{n,m+l}^{(m)}|
$$
for each $0\leq l\leq m-1$. For each $\pi=\pi_1\pi_2\ldots\pi_n\in\bs_{n,l}^{(m)}$, define $\pi'=\pi_1'\pi_2'\ldots\pi_n'$, where
$$
\pi'_i=
\begin{cases}
\pi_i,\quad&\text{if $2m<i\leq n$};\\
\pi_{2m+1-i},\quad&\text{if $1\leq i\leq2m$}.
\end{cases}
$$
This mapping sets up a bijection between $\bs_{n,l}^{(m)}$ and $\bs_{n,m+l}^{(m)}$.
For example, if  $\pi=\blue{942357}861\in\bs_{9,0}^{(3)}$, then $\pi'=\red{753249}861$. It is easy to verify that
$$
\Altdes(\pi)=\{1,4,6,7\}\quad{and}\quad\Altdes(\pi')=\{1,3,4,7\},
$$
and so $\pi'\in\bs_{9,3}^{(3)}$.
\end{proof}
We hope that the above proof of the $j=0$ case would shed some light on finding a bijective proof of Conjecture~\ref{conj:altmaj2}. It would be interesting to see whether  Proposition~\ref{cor:Qm} can be applied to establish any known or new divisibility properties of combinatorial polynomials. For instance, can Proposition~\ref{cor:Qm} be used to prove combinatorially the divisibility properties of the {\em$q$-tangent numbers} found two decades ago by Andrews--Gessel~\cite{AG} and Foata~\cite{Foa81}?

\section{Concluding remarks and further conjectures}
\label{Sec:final}

Theorem~\ref{main:1} provides a class of combinatorial polynomials which are unimodal but have $\gamma$-vector alternates in sign. Note that combinatorial polynomials with such kind of phenomena have already been reported recently by Brittenham, Carroll, Petersen and   Thomas~\cite{BCPT} and by Sagan and Tirrell~\cite{SJ}.  It might  be interesting to investigate systematically polynomials arising in enumerative combinatorics whose gamma expansions have coefficients alternate in sign.

Theorem~\ref{main:1}  may be generalized in  two directions as follows.

\subsection{A log-concavity conjecture}
Another property that is stronger than unimodality is the so-called  log-concavity. Recall that a polynomial $h(t)=\sum_{k=0}^nh_kt^k\in\mathbb{R}[t]$ is said to be {\em log-concave} if $h_i^2\geq h_{i-1}h_{i+1}$ for all $1\leq i\leq n-1$. If the coefficients of $h(t)$ have {\em no internal zero}, that is, there do not exist integers $0\leq i<j<k\leq n$ such that $a_i\neq0,a_j=0,a_k\neq0$, then the log-concavity of $h(t)$ implies its unimodality. Based on calculations and Theorem~\ref{unimodal}, we posed the following conjecture.

\begin{conjecture}
The alternating Eulerian polynomial $\widehat{A}_n(t)$ is log-concave for any $n\geq1$.
\end{conjecture}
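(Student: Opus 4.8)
The plan is to prove the coefficientwise log-concavity $\widehat{A}_{n,k}^2\geq\widehat{A}_{n,k-1}\widehat{A}_{n,k+1}$ by induction on $n$, driven entirely by the five-term recurrence of Theorem~\ref{new:rec}. First I would record that the reduction to these inequalities is legitimate: by Theorem~\ref{unimodal} the sequence $(\widehat{A}_{n,k})_k$ is unimodal with strictly positive entries in the range $0\le k\le n-1$, so it has no internal zeros, and hence log-concavity of the coefficients is equivalent to log-concavity of the polynomial. The base cases $n\le 3$ are immediate from the explicit list of $\widehat{A}_n(t)$. Palindromicity lets me restrict attention to the left half $k\le\lfloor (n+1)/2\rfloor$, where the unimodality comparisons $\widehat{A}_{n,j}\ge\widehat{A}_{n,j-1}$ are all available as auxiliary inputs.

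The inductive mechanism is to expand
$$4\bigl(\widehat{A}_{n+1,k}^2-\widehat{A}_{n+1,k-1}\,\widehat{A}_{n+1,k+1}\bigr)$$
by substituting \eqref{recuAnk} for each of the three factors. This produces a quadratic form in the level-$n$ numbers $\widehat{A}_{n,k-3},\dots,\widehat{A}_{n,k+2}$ whose coefficients are explicit quadratics in $n$ and $k$. The goal is to partition the resulting terms into groups, each group being manifestly nonnegative by a single level-$n$ inequality. The plain log-concavity hypothesis at level $n$, however, only controls products $\widehat{A}_{n,j-1}\widehat{A}_{n,j+1}$ with a common center $j$, whereas the expansion unavoidably generates "off-center" cross products such as $\widehat{A}_{n,k+1}\widehat{A}_{n,k-3}$ pitted against $\widehat{A}_{n,k-1}^2$ or $\widehat{A}_{n,k}\widehat{A}_{n,k-2}$. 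Controlling these is exactly where log-concavity alone is too weak.

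The main obstacle, and the place where I would invest the effort, is therefore the choice of a self-propagating inductive invariant strong enough to absorb the mixed-sign cross terms. The natural candidate is to strengthen the hypothesis to total positivity of order~$2$ of the triangle, namely
$$\widehat{A}_{n,i}\,\widehat{A}_{n,j}\geq \widehat{A}_{n,i-1}\,\widehat{A}_{n,j+1}\qquad\text{for all } i\le j,$$
which specializes to log-concavity when $j=i$ and supplies precisely the off-center comparisons the expansion needs. I would attempt to close the induction by verifying that the recurrence \eqref{recuAnk}, applied to both factors in each $2\times 2$ minor at level $n+1$, reduces to a nonnegative combination of the level-$n$ minors above together with the unimodality inequalities. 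Equivalently, one may phrase the recurrence as the action of the differential operator in \eqref{eq:a1} and ask whether this ($n$-dependent) operator preserves total positivity of order~$2$; the delicate point there is that the factor $(1-t)(1+t^2)=1-t+t^2-t^3$ has sign changes, so no generic preservation theorem applies and the verification must be genuinely arithmetic in $n$ and $k$.

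Should the grouping prove intractable, the fallback is a direct combinatorial attack: construct an injection from the set of pairs $(\sigma,\tau)\in\ms_n\times\ms_n$ with $\altdes(\sigma)=k-1$ and $\altdes(\tau)=k+1$ into the pairs with $\altdes=k$ each, in the spirit of injective proofs of log-concavity for Eulerian-type statistics. The difficulty there is that the alternating descent statistic lacks the clean transfer-matrix or insertion structure that makes such injections manageable for ordinary descents, so the map would have to be engineered by hand from the min/max-insertion operations used in the proof of Theorem~\ref{new:rec}. I note finally that the real-rootedness route is unavailable: a palindromic real-rooted polynomial is $\gamma$-positive, whereas Theorem~\ref{main:1} shows the $\gamma$-vector of $\widehat{A}_n(t)$ alternates in sign, so $\widehat{A}_n(t)$ has non-real roots and log-concavity cannot be inferred from Newton's inequalities.
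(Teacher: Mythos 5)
First, be aware that this statement is not proved in the paper at all: it is stated as an open conjecture in Section~\ref{Sec:final}, which the authors report having verified only computationally for $n\leq 1000$. So there is no paper proof to compare against, and the relevant question is whether your proposal settles the problem on its own. It does not. What you have written is a proof strategy whose decisive steps are all left open, and you flag this yourself: the expansion of $4\bigl(\widehat{A}_{n+1,k}^2-\widehat{A}_{n+1,k-1}\widehat{A}_{n+1,k+1}\bigr)$ via \eqref{recuAnk} is never carried out, the partition of the resulting quadratic form into manifestly nonnegative groups is never exhibited, and the proposed strengthened invariant (order-$2$ total positivity of the triangle) is never shown to propagate under the recurrence. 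That last point is the entire difficulty: you correctly observe that plain log-concavity is too weak an induction hypothesis because the expansion produces off-center products such as $\widehat{A}_{n,k+1}\widehat{A}_{n,k-3}$, but you give no evidence that the TP${}_2$ inequalities are self-propagating --- and, as you also note, the operator in \eqref{eq:a1} involves the sign-changing factor $(1-t)(1+t^2)$, so no generic preservation theorem can close this for you. The fallback injection is likewise only named, not constructed. A plan in which every load-bearing step is marked ``to be verified'' is not a proof; it reproduces the status of the problem --- open --- rather than resolving it.

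The parts of your proposal that are actually argued are sound and worth keeping. The reduction to coefficientwise inequalities is legitimate (the coefficients $\widehat{A}_{n,k}$, $0\le k\le n-1$, are positive, so there are no internal zeros), palindromicity does let one restrict to the left half, and your closing observation is genuinely useful: a palindromic polynomial with nonnegative coefficients and only real roots is $\gamma$-positive, so Theorem~\ref{main:1}'s sign-alternating $\gamma$-vector shows $\widehat{A}_n(t)$ is not real-rooted, and log-concavity cannot be obtained from Newton's inequalities. This correctly explains why the standard routes fail and why the conjecture is hard --- but it does not prove it.
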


This conjecture has been verified for $n\leq 1000$.

\subsection{Two further $\gamma$-positivity conjectures}

\begin{conjecture}\label{gam:q}
The polynomial $\widehat{A}_n(t,q)$ has the $q$-gamma expansion
\begin{equation}\label{exp:q}
\sum_{\pi\in \ms_n}t^{\altdes(\pi)}q^{\altmaj(\pi)}=\sum_{k=0}^{\lfloor(n-1)/2\rfloor}\widehat{\gamma}_{n,k}(q)q^{{k+1\choose 2}}(-t)^k\prod_{i=k+1}^{n-1-k}(1+tq^i),
\end{equation}
where $\widehat{\gamma}_{n,k}(q)\in\mathbb{N}[q]$ and has $(1+q)^k$ as a divisor.
\end{conjecture}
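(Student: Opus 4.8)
The plan is to imitate the two-stage strategy of Section~\ref{Sec:3}: first show the expansion exists and pin the coefficients down, then treat nonnegativity and divisibility as separate properties. Write $B_{n,k}(t,q)=q^{\binom{k+1}{2}}(-t)^k\prod_{i=k+1}^{n-1-k}(1+tq^i)$. As polynomials in $t$ these have pairwise distinct degrees $n-1-k$, so they are linearly independent and one can solve for $\widehat{\gamma}_{n,k}(q)$ by peeling off the coefficients of $t^{n-1},t^{n-2},\dots$ in turn; the top coefficient already forces $\widehat{\gamma}_{n,0}(q)=E_n$. To control the symmetry I would use the complement $\mathcal{C}$, which gives $\altdes(\mathcal{C}\pi)=n-1-\altdes(\pi)$ and $\altmaj(\mathcal{C}\pi)=\binom{n}{2}-\altmaj(\pi)$, hence the $q$-palindromy $\widehat{A}_n(t,q)=t^{n-1}q^{\binom{n}{2}}\widehat{A}_n(t^{-1},q^{-1})$; a direct check yields the covariance $t^{n-1}q^{\binom{n}{2}}B_{n,k}(t^{-1},q^{-1})=q^{k(n-1-k)}B_{n,k}(t,q)$, so each $\widehat{\gamma}_{n,k}(q)$ must be palindromic of degree $k(n-1-k)$. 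This symmetry, together with matching the constant term of $\widehat{A}_n(t,q)$, is what I would use to clear the negative powers of $q$ introduced by the unit leading coefficients $(-1)^k q^{\binom{n-k}{2}}$ of the $B_{n,k}$ and so conclude $\widehat{\gamma}_{n,k}(q)\in\mathbb{Z}[q]$.

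For an induction I would feed the expansion into the quadratic recursion~\eqref{rec:2}. After the shifts $t\mapsto tq^{a+1}$ and the convolution are re-expanded in the $B$-basis — a mechanical but lengthy step, since for instance $B_{n,a}(tq,q)$ is a product with shifted range and is not a single $B_{n+1,\cdot}$ — one obtains a recurrence expressing $\widehat{\gamma}_{n+1,k}(q)$ through products $\widehat{\gamma}_{i,a}(q)\widehat{\gamma}_{n-i,b}(q)$; even organizing this recurrence cleanly is substantial work. The generating function~\eqref{gen:rem} offers an alternative route: its right side is $\prod_j(\sec(zq^j)+\tan(zq^j))$, and separating the even part $\sec$ from the odd part $\tan$ is exactly the mechanism producing $\gamma$-positivity at $q=1$, so a $q$-deformation of that separation might yield the expansion directly.

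The decisive difficulty is nonnegativity, $\widehat{\gamma}_{n,k}(q)\in\mathbb{N}[q]$. As the remark following Theorem~\ref{th:sim} already warns at $q=1$, positivity of $a(n,k)$ does \emph{not} follow from the recurrence~\eqref{rec:gam} and must instead be read off from the \textbf{cd}-index/Simsun model; I therefore do not expect to prove positivity from the $q$-recurrence, which will carry negative coefficients. The natural target is a combinatorial model on Simsun permutations refining Theorem~\ref{th:sim}: since $a(n,k)=\sum_{\pi\in RS_{n-1}}\binom{\des(\pi)}{k}$, one wants $\widehat{\gamma}_{n,k}(q)$ to be a $q$-analog of $2^k a(n,k)$, for instance a $q$-enumeration of triples $(\pi,S,\epsilon)$ with $\pi\in RS_{n-1}$, $S$ a $k$-subset of the descents of $\pi$, and $\epsilon\in\{0,1\}^S$. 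Producing such a model — equivalently, a $q$-$\textbf{cd}$-index refining the identity $\widehat{\Phi}_n(\mathbf{c},\mathbf{d})=\Phi_n(\mathbf{c},\mathbf{c}^2-\mathbf{d})$ so as to record $\altmaj$ — is, I believe, the principal obstacle and the heart of the problem.

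The divisibility $(1+q)^k\mid\widehat{\gamma}_{n,k}(q)$ is a $q$-analog of the trivial $2^k\mid 2^k a(n,k)$, and in the model above it would come precisely from the $(1+q)$-weight attached to each of the $k$ coordinates of $\epsilon$. Failing a full model, I would try to prove the divisibility directly by a sign-reversing involution extracting one factor $1+q$ at a time, generalizing the initial-block reversal used to settle the $j=0$ case of Conjecture~\ref{conj:altmaj2} (which there yields a single factor $1+q^m$). The intended order of attack is thus: existence, palindromy, and the $\widehat{\gamma}$-recurrence first; then the combinatorial model, the hard step; with nonnegativity and $(1+q)^k$-divisibility following from it.
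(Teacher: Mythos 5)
This statement is Conjecture~\ref{gam:q} of the paper: it is \emph{open}, verified there only for $n\leq 10$, so there is no paper proof to compare against. The only portion the paper actually establishes is the weakest one, namely that the expansion~\eqref{exp:q} exists with $\widehat{\gamma}_{n,k}(q)\in\mathbb{Z}[q]$ (via the involution $\Theta$ and \cite[Lemma~4.3]{Lin}). Your proposal, by its own admission, goes no further: you explicitly flag that nonnegativity cannot be expected from the $q$-recurrence obtained by feeding~\eqref{exp:q} into~\eqref{rec:2}, that the combinatorial model you would need (a $q$-analog of the Simsun/\textbf{cd}-index description of $a(n,k)$ from Theorem~\ref{th:sim} that records $\altmaj$) is ``the principal obstacle,'' and that the $(1+q)^k$-divisibility would only follow from that missing model. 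Since $\widehat{\gamma}_{n,k}(q)\in\mathbb{N}[q]$ and $(1+q)^k\mid\widehat{\gamma}_{n,k}(q)$ are precisely the content of the conjecture, what you have written is a (sensible, honestly hedged) research plan, not a proof; the gap is total.

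One concrete technical point on the part that \emph{can} be settled, where your route deviates from the paper's and is flawed as sketched. The paper derives existence and integrality from the involution $\Theta$ ($\mathcal{C}\circ\mathcal{R}$ for $n$ even, $\mathcal{R}$ for $n$ odd), which gives the symmetry $\widehat{A}_n(t,q)=t^{n-1}q^{\binom{n}{2}}\widehat{A}_n(1/(tq^n),q)$. This is a symmetry in $t$ alone, i.e.\ a $\mathbb{Q}(q)$-linear involution on polynomials of $t$-degree at most $n-1$, and one checks that its fixed space is exactly spanned by the $\lfloor(n-1)/2\rfloor+1$ polynomials $q^{\binom{k+1}{2}}t^k\prod_{i=k+1}^{n-1-k}(1+tq^i)$; this is what \cite[Lemma~4.3]{Lin} supplies, and triangularity in the $t$-degree then gives $\widehat{\gamma}_{n,k}(q)\in\mathbb{Z}[q]$. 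Your symmetry, coming from the complement $\mathcal{C}$ alone, inverts $q$ as well: $f\mapsto t^{n-1}q^{\binom{n}{2}}f(1/t,1/q)$. Your covariance computation $t^{n-1}q^{\binom{n}{2}}B_{n,k}(t^{-1},q^{-1})=q^{k(n-1-k)}B_{n,k}(t,q)$ is correct, but this map is only semilinear over $\mathbb{Q}(q)$, its fixed set is not a $\mathbb{Q}(q)$-subspace, and the $B_{n,k}$ are covariant rather than fixed. Consequently your peeling argument determines candidate $\widehat{\gamma}_{n,k}$ from the coefficients of $t^{n-1},\dots,t^{\lceil(n-1)/2\rceil}$, but nothing in your sketch forces the lower-degree coefficients of $t$ to match automatically: that is exactly the role the symmetry must play, and with a $q$-inverting symmetry it does not come for free --- one would additionally need to know beforehand that the candidate coefficients satisfy $\widehat{\gamma}_{n,k}(q)=q^{k(n-1-k)}\widehat{\gamma}_{n,k}(1/q)$, which is a conclusion, not a hypothesis. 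Replacing $\mathcal{C}$ by the paper's $\Theta$ repairs this step; the conjecture itself, of course, remains open either way.
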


Conjecture~\ref{gam:q} is an analog of the $q$-$\gamma$-positivity expansion due to Han, Jouhet and Zeng~\cite{hjz} for the {\em Euler--Mahonian polynomials} $\sum_{\pi\in\ms_n}t^{\des(\pi)}q^{\maj(\pi)}$.
For each $\pi\in\ms_n$, let $\mathcal{R}(\pi):=\pi_n\pi_{n-1}\cdots\pi_1$ be its {\em reversal}. When $n$ is even (resp.~odd), the mapping $\Theta=\mathcal{C}\circ\mathcal{R}$ (resp.~$\Theta=\mathcal{R}$) is an involution on $\ms_n$ such that if $\pi'=\Theta(\pi)$, then
$$
\altdes(\pi')=n-1-\altdes(\pi)\quad\text{and}\quad\altmaj(\pi')={n\choose2}-n\times\altdes(\pi)+\altmaj(\pi).
$$
The existence of expansion~\eqref{exp:q} with $\widehat{\gamma}_{n,k}(q)\in\mathbb{Z}[q]$ then follows from $\Theta$ and~\cite[Lemma~4.3]{Lin}.
The first few values of $\widehat{\gamma}_{n,k}(q)$ are listed in Table~\ref{tq:gamma}.

\begin{table}
\begin{tabular}{|p{0.6in}|c|c|c|c|c|cl} \hline
\centering $\widehat{\gamma}_{n,k}(q)$ & $k=0$ &$k=1$&$k=2$&$k=3$\\
\hline
\centering $n=2$ & $1$ &&& \\
\hline
\centering $n=3$ & $2$ & $1+q$&&\\
\hline
\centering $n=4$ & $5$& $2(1+q)^2$&&\\
\hline
\centering $n=5$ & $16$& $(1+q)(7+5q+7q^2)$&$(1+q)^2(2+2q^2)$& \\
\hline
\centering $n=6$ & $61$& $(1+q)^2(26-5q+26q^2)$&$(1+q)^2(1+q^2)\times\atop\qquad\quad(5+7q+5q^2)$& \\
\hline
\centering $n=7$ & $272$& $(1+q)(117+91q+103q^2\atop+91q^3+117q^4)$&$(1+q)^2(1+q^2)(1+q+q^2)\times\atop\qquad\qquad(26-5q+26q^2)$&$(1+q)^2(1+q^2)(1+q^3)\times\atop\qquad\quad\quad(12-7q+12q^2)$\\
\hline
\centering $n=8$ & $1385$& $6(1+q)^2(99-21q+106q^2\atop-21q^3+99q^4)$&$2(1+q)^2(1+q^2)(63+62q+98q^2\atop+118q^3+98q^4+62q^5+63q^6)$&$(1+q)^3(1+q^2)(1+q^3)\times\atop(21-14q+48q^2-14q^3+21q^4)$\\
\hline
\end{tabular}
\vspace{.2in}
\caption{The values of $\widehat{\gamma}_{n,k}(q)$ for $2\leq n\leq8$}
\label{tq:gamma}
\end{table}

It was conjectured  in 2005 by Gessel  (see~\cite[p.~446]{bran}) and latter confirmed by the first author~\cite{Lin0} that the {\em two-sided Eulerian polynomials} $\sum_{\pi\in\ms_n}s^{\des(\pi^{-1})}t^{\des(\pi)}$ can be expanded in the basis (of polynomials with certain two-sided symmetries)
$$
\{(st)^i(1+st)^j(s+t)^{n-1-2i}\}
$$
with nonnegative coefficients. This refines the $\gamma$-positivity of the Eulerian polynomials and inspires the following fascinating refined two-sided $\gamma$-expansion.

\begin{conjecture}\label{gam:dou} Introduce the {\em two-sided alternating Eulerian polynomials}
$$
\widetilde{A}_n(s,t):=\sum_{\pi\in\ms_n}s^{\altdes(\pi^{-1})}t^{\altdes(\pi)}.
$$
Then, for $n\geq1$
\begin{equation}\label{exp:dou}
\widetilde{A}_n(s,t)=\sum_{i,j\geq0\atop j+2i\leq n-1}\widehat{\gamma}_{n,i,j}(-st)^i(1+st)^j(s+t)^{n-1-j-2i},
\end{equation}
where $\widehat{\gamma}_{n,i,j}$ are nonnegative integers.
\end{conjecture}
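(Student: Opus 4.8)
The plan is to follow the template by which Gessel's conjecture for the ordinary two-sided Eulerian polynomials was settled in~\cite{Lin0}, separating the argument into an easy \emph{existence/integrality} step and a hard \emph{positivity} step.

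First I would isolate the two symmetries of $\widetilde{A}_n(s,t)$ that force it into the span of the proposed basis. The map $\pi\mapsto\pi^{-1}$ interchanges $\altdes(\pi)$ and $\altdes(\pi^{-1})$, so $\widetilde{A}_n(s,t)=\widetilde{A}_n(t,s)$. For the palindromic symmetry I would reuse the involution $\Theta$ of the excerpt together with the identities $(\mathcal{C}\pi)^{-1}=\mathcal{R}(\pi^{-1})$ and $(\mathcal{R}\pi)^{-1}=\mathcal{C}(\pi^{-1})$. Since $\mathcal{C}$ and $\mathcal{R}$ commute, a short case check (using that $\Theta=\mathcal{C}\mathcal{R}$ for even $n$ and $\Theta=\mathcal{R}$ for odd $n$) shows that $(\Theta\pi)^{-1}$ is obtained from $\pi^{-1}$ by an alternating-descent-complementing operation, so that $\altdes(\Theta\pi)=n-1-\altdes(\pi)$ and $\altdes((\Theta\pi)^{-1})=n-1-\altdes(\pi^{-1})$ hold simultaneously. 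Hence $\widetilde{A}_n(s,t)=(st)^{n-1}\widetilde{A}_n(1/s,1/t)$.

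Being symmetric and doubly palindromic of the right degree characterizes membership in the linear span of $\{(st)^i(1+st)^j(s+t)^{n-1-2i-j}\}$, and since $(-st)^i=(-1)^i(st)^i$ this is exactly the span in~\eqref{exp:dou}. The change of basis from monomials to this basis is triangular, hence invertible over $\mathbb{Z}$, so the coefficients $\widehat{\gamma}_{n,i,j}$ exist and lie in $\mathbb{Z}$; this is the two-variable analogue of the one-sided discussion preceding Table~\ref{tq:gamma}, and should follow from a two-sided version of~\cite[Lemma~4.3]{Lin}. Integrality being settled, only nonnegativity remains. As a useful anchor, setting $s=1$ collapses~\eqref{exp:dou} to the expansion of Theorem~\ref{main:1} and forces $\sum_j\widehat{\gamma}_{n,i,j}=2^i\,a(n,i)$, so the $\widehat{\gamma}_{n,i,j}$ must refine the Simsun descent numbers $a(n,i)$ of Theorem~\ref{th:sim}.

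The hard part is the nonnegativity of the $\widehat{\gamma}_{n,i,j}$, which I would attack along two parallel routes. The combinatorial route seeks to upgrade Theorem~\ref{th:sim}: one would look for a \emph{pair} of compatible descent-type statistics on Simsun permutations (or on the \textbf{cd}-index data of $\ms_n$) whose joint distribution is $\widehat{\gamma}_{n,i,j}$ and whose marginal recovers $\sum_j\widehat{\gamma}_{n,i,j}=2^i a(n,i)$. The algebraic route seeks an inverse-aware, two-variable refinement of the five-term recurrence~\eqref{recuAnk} or of the quadratic recursion~\eqref{rec:2} which, rewritten in the basis of~\eqref{exp:dou}, carries manifestly nonnegative structure constants, so that positivity propagates by induction on $n$. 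The principal obstacle is exactly this positivity: in contrast to the one-sided polynomial the two-sided polynomial admits no product factorization to exploit, and the appearance of $(-st)^i$ in place of $(st)^i$ means that Lin's nonnegativity theorem for the \emph{ordinary} two-sided Eulerian polynomials cannot simply be transported, since the alternating-descent sign pattern is genuinely different. A successful proof will most likely require either a new ``two-sided Simsun'' object carrying two simultaneously controllable alternating-descent statistics, or a valley-hopping/group-action argument on $\ms_n$ that preserves both $\altdes(\pi)$ and $\altdes(\pi^{-1})$; constructing such a structure is the crux of the problem.
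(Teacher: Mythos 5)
The statement you are addressing is a \emph{conjecture} in the paper, and the paper does not prove it: all that is established there is the existence of the expansion \eqref{exp:dou} with \emph{integer} coefficients $\widehat{\gamma}_{n,i,j}$, which follows from the two symmetries $\widetilde{A}_n(s,t)=\widetilde{A}_n(t,s)$ and $\widetilde{A}_n(s,t)=(st)^{n-1}\widetilde{A}_n(1/s,1/t)$ together with \cite[Lemma~5]{Lin0} (that lemma is already the two-sided statement, so there is no need for a ``two-sided version of \cite[Lemma~4.3]{Lin}''); nonnegativity is only verified computationally for $n\leq 10$. Your proposal reproduces exactly this existence/integrality part --- and your $s=1$ specialization $\sum_j\widehat{\gamma}_{n,i,j}=2^i\,a(n,i)$ is a correct and useful consistency check against Theorem~\ref{main:1} --- but then it stops where the paper stops: the two ``routes'' you sketch for nonnegativity (a two-sided Simsun-type object, or a recursion with nonnegative structure constants in this basis) are research programs, not arguments. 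Since the nonnegativity of the $\widehat{\gamma}_{n,i,j}$ is the entire content of the conjecture, your proposal does not prove the statement; the missing idea is precisely what is missing in the literature, which is why the paper poses this as a conjecture rather than a theorem.

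There is also a concrete slip in the part you do argue. Following the paper's remark after Conjecture~\ref{gam:q}, you take $\Theta=\mathcal{C}\circ\mathcal{R}$ for $n$ even and $\Theta=\mathcal{R}$ for $n$ odd and claim $\altdes(\Theta\pi)=n-1-\altdes(\pi)$. With this parity assignment the claim is false: $\mathcal{C}$ complements $\altdes$ for every $n$, while $\mathcal{R}$ complements $\altdes$ only for $n$ even and \emph{preserves} it for $n$ odd. Hence $\mathcal{C}\circ\mathcal{R}$ preserves $\altdes$ when $n$ is even (for $n=4$, $\pi=1234$ is a fixed point of $\mathcal{C}\circ\mathcal{R}$, yet $\altdes(\pi)=1\neq 2$), and $\mathcal{R}$ fails when $n$ is odd (for $n=3$, $\altdes(213)=\altdes(312)=2$). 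The assignment must be swapped: for $n$ even take $\Theta=\mathcal{R}$ (or $\mathcal{C}$), using your identity $(\mathcal{R}\pi)^{-1}=\mathcal{C}(\pi^{-1})$; for $n$ odd take $\Theta=\mathcal{C}\circ\mathcal{R}$, which is conjugation by the longest element and therefore commutes with inversion. With this correction the simultaneous complementation of $\altdes(\pi)$ and $\altdes(\pi^{-1})$ --- and hence the existence and integrality of the $\widehat{\gamma}_{n,i,j}$ --- does go through, matching what the paper actually establishes; the positivity assertion remains open.
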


The existence of expansion~\eqref{exp:dou} is guaranteed by~\cite[Lemma~5]{Lin0} and the required symmetries of $\widetilde{A}_n(s,t)$ that follow from the complement and inverse of permutations. We list the first few expansions of $\widetilde{A}_n(s,t)$ here:
\begin{align*}
\widetilde{A}_2(s,t)&=1+st,\\
\widetilde{A}_3(s,t)&=(1+st)^2+(s+t)^2-2st,\\
\widetilde{A}_4(s,t)&=2(1+st)^3+(s+t)^3+2(s+t)^2(1+st)-5st(1+st)-3st(s+t),\\
\widetilde{A}_5(s,t)&=3(1+st)^4+2(1+st)^3(s+t)+6(1+st)^2(s+t)^2+2(1+st)(s+t)^3\\
&\quad+3(s+t)^4-14st(1+st)-10st(1+st)(s+t)-14st(s+t)^2
+16(st)^2.
\end{align*}

It would be interesting to compute the generating function for the two-sided alternating Eulerian polynomials $\widetilde{A}_n(s,t)$.

Conjectures~\ref{gam:q}  and~\ref{gam:dou} have been verified for $n\leq10$.

\section*{Acknowledgement}
The authors thank  Yuan-Hsun Lo for his helpful discussions.
The first author  was supported by the National Natural Science Foundation of China grant 11871247 and the project of Qilu Young Scholars of Shandong University. The second author was supported by the National Natural Science Foundation of China grant 12071063. The third author was supported by  the National Natural Science Foundation of China grant 11671037. The fourth author was supported  by the National Natural Science Foundation of China grant 11801424 and a start-up research grant of Wuhan University.


\begin{thebibliography}{99}
\bibitem{andre} D. Andr\'e, D\'eveloppement de sec\,x and tg\,x, C. R. Math. Acad. Sci. Paris, {\bf88} (1879), 965--979.

\bibitem{AG} G.E. Andrews and I.M. Gessel, Divisibility properties of the $q$-tangent numbers, Proc. Amer. Math. Soc., {\bf68} (1978), 380--384.

\bibitem{Ath} C.A. Athanasiadis, Gamma-positivity in combinatorics and geometry, S\'em. Lothar. Combin., {\bf77} (2018), Article B77i, 64pp.

\bibitem{bran} P. Br\"and\'en, Unimodality, log-concavity, real-rootedness and beyond, {\em Handbook of Enumerative Combinatorics}, CRC Press Book (\href{http://arxiv.org/abs/1410.6601}{arXiv:1410.6601}).

\bibitem{BCPT} C. Brittenham, A.T. Carroll, T.K. Petersen and C. Thomas, Unimodality via alternating gamma
vectors, Electron. J. Comb., {\bf23 (2)} (2016), \#P2.40.




\bibitem{car} L. Carlitz,  $q$-Bernoulli and Eulerian numbers, Trans. Amer. Math. Soc.,  {\bf76} (1954), 332--350.

\bibitem{car2} L. Carlitz,  A combinatorial property of $q$-Eulerian numbers, Amer. Math. Monthly,  {\bf82} (1975), 51--54.

\bibitem{Chebikin08}
D. Chebikin, Variations on descents and inversions in permutations,  Electron. J.
Combin., {\bf15} (2008), \#R132.

\bibitem{CS} C.-O. Chow and W.C. Shiu, Counting Simsun Permutations by Descents, Ann. Comb., {\bf15} (2011), 625--635.

\bibitem{comtet} L. Comtet, Advanced Combinatorics, Boston, Dordrecht, 1974.


\bibitem{Foa81}
D. Foata, Further divisibility properties of the $q$-tangent numbers,
Proc. Amer. Math. Soc., {\bf81} (1981), 143--148.

\bibitem{fs} D. Foata and M.-P. Sch\"utzenberger, Th\'eorie g\'eom\'etrique des polyn\^omes eul\'eriens,  Lecture Notes in Mathematics, Vol. 138, Springer-Verlag, Berlin, 1970.



\bibitem{Gessel14}
I.M. Gessel, Y. Zhuang, Counting Permutations by alternating descents, Electron. J.
Combin., {\bf21(4)} (2014), \#P4.23.

\bibitem{hjz} G.-N. Han, F. Jouhet and J. Zeng, Two new triangles of $q$-integers via $q$-Eulerian polynomials of Type $A$ and $B$, Ramanujan J., {\bf31} (2013), 115--127.

\bibitem{Lin0} Z. Lin, Proof of Gessel's $\gamma$-positivity conjecture,  Electron. J. Combin., {\bf23}     (2016), \#P3.15.

\bibitem{Lin} Z. Lin, On $\gamma$-positive polynomials arising in pattern avoidance,  Adv. in Appl. Math., {\bf82} (2017), 1--22.


\bibitem{my} S.-M. Ma and Y.-N. Yeh, Enumeration of permutations by number of alternating descents, Discrete Math., {\bf339} (2016), 1362--1367.

\bibitem{park} S. Park, The $r$-Multipermutations, J. Combin. Theory Ser. A, {\bf67} (1994), 44--71.

\bibitem{pe2}T.K. Petersen,  {\em Eulerian numbers}. With a foreword by Richard Stanley. Birkh\"auser Advanced Texts: Basler Lehrb\"ucher. Birkh\"auser/Springer, New York, 2015.



\bibitem{rem}
J.B. Remmel, Generating Functions for Alternating Descents and
Alternating Major Index, Ann. Comb., {\bf16} (2012), 625--650.

\bibitem{oeis} OEIS Foundation Inc., The On-Line Encyclopedia of Integer Sequences,  \href{http://oeis.org}{http://oeis.org}, 2011.


\bibitem{SJ} B.E. Sagan and J. Tirrell,  Lucas atoms, Adv. Math., {\bf374} (2020), 107387.

\bibitem{Stanley}
R.P. Stanley,  A Survey of Alternating Permutations, in Combinatorics
and Graphs, R.A. Brualdi et. al. (eds.), Contemp. Math., Vol. 531,
 Amer. Math. Soc., Providence, RI, 2010, pp.~165--196.


 \bibitem{sun} S. Sundaram, The homology of partitions with an even number of blocks, J. Algebraic Combin., {\bf4} (1995),  69--92.





\end{thebibliography}
\end{document}